\theoremstyle{plain} \newtheorem{theorem}{Theorem}[section]
\theoremstyle{plain} \newtheorem{proposition}[theorem]{Proposition}
\theoremstyle{plain} \newtheorem{lemma}[theorem]{Lemma}
\theoremstyle{plain} 
\theoremstyle{definition} 
\theoremstyle{definition} 
\theoremstyle{remark} 
\theoremstyle{remark} 
\renewcommand{\P}{\mathbf P}
\newcommand{\R}{\mathbb R}
\renewcommand{\SS}{\mathbb S}
\newcommand{\dint}{\mathrm{d}}
\DeclareMathOperator{\Var}{\mathbf{Var}}
\DeclareMathOperator{\E}{\mathbf E}
\DeclareMathOperator{\Vis}{Vis}
\DeclareMathOperator{\conv}{conv}
\DeclareMathOperator{\aff}{aff}
\DeclareMathOperator{\vol}{vol}
\DeclareMathOperator{\proj}{proj}
\newcommand{\indicator}{\boldsymbol{1}}
\newlength{\querylen}
\begin{document}
\title{\Large \textbf{Limit theorems for random polytopes\\ with vertices on convex surfaces}}

\author{Nicola Turchi\footnotemark[1]\;\footnotemark[2]\; and Florian Wespi\footnotemark[3]} 
\date{}

\renewcommand{\thefootnote}{\fnsymbol{footnote}}
\footnotetext[1]{Faculty of Mathematics, Ruhr University Bochum, Germany. E-mail: nicola.turchi@rub.de}

\footnotetext[3]{Institute of Mathematical Statistics and Actuarial Science, University of Bern, Switzerland. E-mail: florian.wespi@stat.unibe.ch}

\footnotetext[2]{Research supported by the Deutsche Forschungsgemeinschaft (DFG) via RTG2131 \textit{High-dimensional Phenomena in Probability -- Fluctuations and Discontinuity}.}

\maketitle

\begin{abstract}
\noindent The random polytope \(K_n\), defined as the convex hull of \(n\) points chosen uniformly at random on the boundary of a smooth convex body, is considered. Proofs for lower and upper variance bounds, strong laws of large numbers and central limit theorems for the intrinsic volumes of \(K_n\) are presented. A normal approximation bound from Stein's method and estimates for surface bodies are among the involved tools.
\bigskip

\noindent \textbf{Keywords}. Central limit theorem, intrinsic volume, random polytope,  stochastic geometry, surface body, variance.

\noindent \textbf{MSC (2010)}. 52A22, 60D05, 60F05.
\end{abstract}

\section{Introduction and main results}

For fixed \(d\ge 2\), let \(\mathcal{K}^2_+\) be the set of convex bodies in \(\R^d\) which have a twice differentiable boundary with everywhere positive Gaussian curvature. Let \(K\in\mathcal{K}^2_+\) be a convex body. We denote by $\mathcal{H}^{d-1}$ the $(d-1)$-dimensional Hausdorff measure on $\partial K$, normalized such that $\mathcal{H}^{d-1}(\partial K)=1$. For \(n\ge d+1\), we choose random points \(X_1,\ldots,X_n\) from \(\partial K\), independently and according to \(\mathcal{H}^{d-1}\). We denote by \(K_n\) the convex hull of \(X_1,\ldots, X_n\). This means that \(K_n\) is a random polytope having its vertices on the boundary of \(K\). The interest of this paper is about the intrinsic volumes \(V_\ell(K_n)\) of \(K_n\), \(\ell\in\{1,\ldots,d\}\). The importance of these functionals is well-known and arises from convex and integral geometry. Indeed, as Hadwiger's theorem states, they form (together with the Euler characteristic) a basis of the vector space of all motion invariant and continuous valuations on convex bodies. With this paper we provide lower and upper variance bounds, strong laws of large numbers and central limit theorems for \(V_\ell(K_n),\,\ell\in\{1,\ldots,d\}\), filling some gaps that remain in the study of these objects.

\bigskip

Intrinsic volumes have been studied extensively in the alternative setting of random polytopes that arise as convex hulls of points chosen uniformly at random inside a fixed convex body. Results concerning the expectation of \(V_\ell(K_n)\),
\(\ell\in\{1,\ldots,d\}\), have been studied, for example, by Reitzner \cite{Reitzner2004}, variance bounds can be found in B\"or\"oczky, Fodor, Reitzner and V\'igh \cite{BFRV2009} and B\'ar\'any, Fodor and V\'igh \cite{Barany2010}, and central limit theorems were treated in Reitzner \cite{Reitzner2005}, Vu \cite{Vu2006},  Lachi\`eze-Rey, Schulte and Yukich \cite{Lachieze-Rey2017} and Th\"ale, Turchi and Wespi \cite{TTW2017}. More details can be found in the references therein.

\bigskip

On the other hand, the approximation of a convex body \(K\), by means of a sequence of random polytopes \(K_n\), is improved whenever the vertices of \(K_n\) are restricted to lie on the boundary of \(K\), therefore making it a model worth studying. Indeed, in this framework the expectations of \(V_\ell(K_n)\), \(\ell\in\{1,\ldots,d\}\), have been studied, for example, by Buchta, M\"uller and Tichy \cite{Buchta1985}, Reitzner \cite{Reitzner2002}, Sch\"utt and Werner \cite{Schutt2003} and B\"or\"oczky, Fodor and Hug \cite{BFH2013}. However, more detailed informations are only known about the distribution of the volume \(V_d(K_n)\). In particular, an upper variance bound was found by Reitzner \cite{Reitzner2003} and a lower variance bound together with concentration inequalities by Richardson, Vu and Wu \cite{Richardson2008}. Only recently, Th\"ale \cite{Thale2017} obtained a quantitative central limit theorem for \(V_d(K_n)\) based on Stein's method. 

\bigskip

Our first aim is to generalize the results obtained in \cite{Reitzner2003, Richardson2008} to \(V_\ell(K_n)\), \(\ell\in\{1,\ldots,d\}\). In fact, we prove lower variance bounds following the ideas of \cite{Barany2010,Reitzner2005, Richardson2008} and upper variance bounds in the manner of \cite{Barany2010}, making use of the Efron-Stein jackknife inequality from \cite{Reitzner2003}. In particular, the upper variance bounds imply strong laws of large numbers as in \cite{Barany2010}. Secondly, we prove quantitative central limit theorems for \(V_\ell(K_n)\), \(\ell\in\{1,\ldots,d\}\), using a normal approximation bound obtained in \cite{LachiezeRey}, extending the result of \cite{Thale2017}.

\bigskip

We now introduce some notation in order to present our results. Let $(a_n)_{n\in\mathbb{N}}$ and $(b_n)_{n\in\mathbb{N}}$ be two sequences of real numbers. We write $a_n\ll b_n$ (or $a_n\gg b_n$) if there exist a constant $c\in(0,\infty)$ and a positive number $n_0$ such that $a_n\leq c \,b_n$ (or $a_n\geq c \,b_n$) for all $n\geq n_0$. Furthermore, $a_n=\Theta(b_n)$ means that $b_n\ll a_n \ll b_n$.

\bigskip

Our first result concerns asymptotic lower and upper bounds, respectively, for the variances of the intrinsic volumes.

\begin{theorem}
\label{thm:variancebounds}
	Let $K\in\mathcal{K}_{+}^2$ and choose $n$ random points on $\partial K$ independently 
	and according to the probability distribution $\mathcal{H}^{d-1}$. Then, 
	\begin{equation*}
	\Var [V_{\ell}(K_n)]=\Theta\bigl(n^{-\frac{d+3}{d-1}}\bigr), \quad \ell\in\{1,\dots,d\}.
	\end{equation*}
\end{theorem}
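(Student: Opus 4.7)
The plan is to establish the upper and lower bounds separately, extending the $\ell=d$ results of \cite{Reitzner2003, Richardson2008} to all $\ell\in\{1,\ldots,d\}$ in the spirit of what \cite{Barany2010} carried out in the interior model.

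For the upper bound, I would start from the Efron--Stein jackknife inequality of \cite{Reitzner2003}, which for monotone functionals of convex hulls yields
\begin{equation*}
\Var[V_\ell(K_n)] \;\ll\; n\,\E\bigl[(V_\ell(K_{n+1}) - V_\ell(K_n))^2\bigr];
\end{equation*}
note that $V_\ell(K_{n+1}) \geq V_\ell(K_n)$ by monotonicity of intrinsic volumes under inclusion. The increment $V_\ell(K_{n+1})-V_\ell(K_n)$ is localized in the cap that $X_{n+1}$ cuts off from $K_n$. I would bound it by a constant multiple of the $V_d$-increment over that cap, using a comparison estimate for nested convex bodies whose symmetric difference lies in a small cap; the $\mathcal{K}^2_+$ regularity is crucial here since it allows all intrinsic volumes of such caps to be controlled uniformly by their $d$-volume. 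It then remains to integrate the squared cap-volume over the location of $X_{n+1}\in\partial K$, which is precisely where \emph{surface body} estimates (the $\partial K$-analogues of floating body bounds) enter. I expect this to produce $\E[(V_\ell(K_{n+1})-V_\ell(K_n))^2]\ll n^{-2(d+1)/(d-1)}$, so that multiplication by $n$ yields the asserted upper bound.

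For the lower bound, following \cite{Richardson2008, Barany2010}, I would place $m=\Theta(n)$ pairwise disjoint surface caps $C_1,\ldots,C_m$ on $\partial K$ of $\hauss$-measure $\Theta(1/n)$ and height $\Theta(n^{-2/(d-1)})$, sufficiently separated that a point landing in $C_i$ affects $V_\ell(K_n)$ essentially only through its local contribution. Let $A_i$ be the event that exactly one of $X_1,\ldots,X_n$ lies in $C_i$; then $\P(A_i)\gg 1$, and conditionally on $A_i$ the location of that unique point inside $C_i$ induces a fluctuation in $V_\ell(K_n)$ of order $n^{-(d+1)/(d-1)}$ with nondegenerate variance (here positivity of the Gaussian curvature keeps the cap from degenerating). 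A standard conditional-variance decomposition over the (essentially independent) indices $i$ then delivers $\Var[V_\ell(K_n)] \gg m\cdot\bigl(n^{-(d+1)/(d-1)}\bigr)^2 = n^{-(d+3)/(d-1)}$.

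The main obstacle, compared to the pure $V_d$ situation already handled in the literature, is the control of the increment $V_\ell(K_{n+1})-V_\ell(K_n)$ for $\ell<d$: unlike $V_d$, this increment is not the volume of an explicit region, and one needs a uniform comparison of the form $V_\ell(K_{n+1})-V_\ell(K_n)\ll V_d(K_{n+1})-V_d(K_n)$ valid for nested convex bodies whose difference is contained in a small cap, which is where the $\mathcal{K}^2_+$ hypothesis becomes decisive. On the lower bound side, the parallel issue is to certify that the per-cap contribution of a single random vertex to $V_\ell$ has nondegenerate variance, ruling out accidental cancellations; once more, the positive Gaussian curvature furnishes the required uniform lower bound.
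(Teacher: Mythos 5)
Your overall architecture (Efron--Stein for the upper bound, disjoint caps and a conditional variance decomposition for the lower bound) matches the paper, and the order-of-magnitude bookkeeping is right; but each half of your argument rests on a claim that is not a known fact and whose proof would occupy the bulk of the work, so as written there are genuine gaps.

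On the upper bound, the proposed ``comparison estimate'' $V_\ell(K_{n+1})-V_\ell(K_n)\ll V_d(K_{n+1})-V_d(K_n)$ for nested convex bodies whose difference lies in a small cap is not a standard result, and deterministically it is simply false: for a cap of height $t$ over a base of diameter $r$ one has $V_d$-increment $\asymp tr^{d-1}$ while $V_1$-increment $\asymp t(t/r)^{d-1}$, and the ratio $(t/r^2)^{d-1}$ can exceed $1$ when $t\gg r^2$ (a pointy cap). In the random setting, the constraint that $X_{n+1}$ lies on $\partial K$ with $K\in\mathcal{K}^2_+$ does force $t\lesssim r^2$ for each simplex $[X_{n+1},F]$, and making that quantitative is essentially what the paper's direct Kubota/cap-covering computation does; so the comparison, if true, is not free but is the heart of the proof. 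Moreover, if instead you only bound the $V_\ell$-increment by the $d$-volume of the surrounding surface-body cap $C$ (of height $\tau_n^{2/(d-1)}$ with $\tau_n\asymp\log n/n$), you obtain $\E[(V_\ell\text{-incr.})^2]\ll(\log n/n)^{2(d+1)/(d-1)}$, leaving an extraneous polylog factor that contradicts the claimed sharp $\Theta$. The paper avoids this by expanding the Kubota integral over Grassmannian subspaces and facet index sets and then summing over dyadic scales via the economic cap covering theorem, which is what collapses the log. Your proposal does not mention this device and cannot produce the sharp upper bound without it (or an honest proof of the pathwise comparison).

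On the lower bound, the scheme ``one point per cap, its motion gives a nondegenerate $V_\ell$-fluctuation of order $n^{-(d+1)/(d-1)}$'' glosses over precisely the two claims that need work. First, a single point in a cap need not have its local contribution to $V_\ell$ controlled, because the facets of $K_n$ that contain it are determined by points outside the cap; the paper instead conditions on an event $A_j$ in which $d{+}1$ points land in $d{+}1$ carefully placed sub-regions $D'_0(y_j),\dots,D'_d(y_j)$ inside the cap, so that the base simplex $F_j$ is deterministic given $A_j$ and the cone over $[x_0,\dots,x_d]$ contains the whole cap (this is \Cref{eq:Ksectionincone}); that is what makes the per-cap fluctuation both nondegenerate and exactly decoupled from the other caps when one conditions on the $\sigma$-field $\mathcal{F}$. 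Second, ``fluctuation of order $n^{-(d+1)/(d-1)}$ with nondegenerate variance'' is the conclusion of a genuine computation (the paper's \Cref{lemma:lower-bound}), which uses Kubota's formula, the angular localization to a cone $\Sigma_j$ of aperture $\sqrt{h_n}$, and \Cref{lemma:measure} to show that $\widetilde V_\ell(X_j;F_j)=\Theta(h_n^{(d+1)/2})$ with an honest lower bound on the spread; asserting nondegeneracy from ``positive Gaussian curvature'' alone does not substitute for this. So your lower bound outline is on the right track but omits exactly the construction and lemma that carry the argument.
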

Based on a result stated in \cite[Theorem 1]{Reitzner2002} concerning the behaviour of \(V_\ell(K)-\E[V_\ell(K_n)]\), 
the upper variance bounds of \Cref{thm:variancebounds} imply strong laws of large numbers.
\begin{theorem}
	\label{thm:strongnumbers}
	In the set-up of \Cref{thm:variancebounds}, it holds that
\begin{equation*}
\P\Bigl(\lim_{n\to\infty}\bigl(V_\ell(K)-V_\ell(K_n)\bigr)\cdot n^{\frac{2}{d-1}}=c_{d,\ell,K}
\Bigr)=1,\quad \ell\in\{1,\dots,d\}, 
\end{equation*}
for some positive constants \(c_{d,\ell,K}\) that depend on \(d,\ell\) and \(K\).
\end{theorem}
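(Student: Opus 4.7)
The plan is to combine the upper variance bound from \Cref{thm:variancebounds} with the first-order asymptotic expansion of $\E[V_\ell(K) - V_\ell(K_n)]$ stated in \cite[Theorem 1]{Reitzner2002}, apply Chebyshev's inequality along a polynomial subsequence, and then extend the resulting almost sure convergence to the full sequence by monotonicity.

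Setting $Z_n := V_\ell(K) - V_\ell(K_n)$, the cited expansion yields a constant $c_{d,\ell,K} > 0$ with $\E Z_n = c_{d,\ell,K}\, n^{-2/(d-1)}(1+o(1))$, while \Cref{thm:variancebounds} gives
\[
\Var\bigl(n^{2/(d-1)} Z_n\bigr) \;=\; n^{4/(d-1)}\,\Var V_\ell(K_n) \;\ll\; n^{-1}.
\]
Chebyshev's inequality then bounds $\P(|n^{2/(d-1)} Z_n - n^{2/(d-1)} \E Z_n| \geq \varepsilon)$ by a constant multiple of $n^{-1}$, which is not summable in $n$. To repair this I would pass to the subsequence $n_k := k^2$; along it $\sum_k n_k^{-1}$ converges, so the Borel--Cantelli lemma (applied with $\varepsilon = 1/m$ and intersected over $m \in \NN$) combined with $n_k^{2/(d-1)}\E Z_{n_k} \to c_{d,\ell,K}$ yields $\lim_k n_k^{2/(d-1)} Z_{n_k} = c_{d,\ell,K}$ almost surely.

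To lift the subsequence limit to the full sequence I would exploit that intrinsic volumes are monotone under set inclusion and $K_n \subseteq K_{n+1}$, so $Z_n$ is almost surely nonincreasing; hence for $n_k \leq n \leq n_{k+1}$ one has the sandwich
\[
n^{2/(d-1)} Z_{n_{k+1}} \;\leq\; n^{2/(d-1)} Z_n \;\leq\; n^{2/(d-1)} Z_{n_k},
\]
and since $n_{k+1}/n_k = (1+1/k)^2 \to 1$, both extremes share the almost sure limit $c_{d,\ell,K}$, so a squeeze completes the proof. The main delicacy is that the direct Chebyshev bound is only $O(n^{-1})$ and thus not summable; this forces the subsequence detour, whose step size must be large enough for $\sum_k n_k^{-1} < \infty$ yet small enough that $n_{k+1}/n_k \to 1$ so the monotonicity interpolation reproduces the correct constant, and $n_k = k^2$ meets both demands.
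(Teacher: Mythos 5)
Your proposal is correct and matches the paper's own proof step for step: Chebyshev with the upper variance bound, the observation that the resulting $O(n^{-1})$ bound is not summable, passing to the subsequence $n_k=k^2$, Borel--Cantelli combined with \cite[Theorem 1]{Reitzner2002}, and finally the monotonicity squeeze using $n_{k+1}/n_k\to 1$ to recover the full sequence.
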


The constants \(c_{d,\ell,K}\) appear in an explicit form in \cite[Theorem 1]{Reitzner2002} and can be expressed in form of integrals of the principal curvatures of \(K\).

\bigskip

Next, we introduce the standardized intrinsic volume functionals, defined by
\begin{equation*}
W_{\ell}(K_n):=\frac{V_{\ell}(K_n)-\E[V_{\ell}(K_n)]}{\sqrt{\Var[V_{\ell}(K_n)]}}, \quad \ell\in\{1,\dots,d\}.
\end{equation*}

We prove the following central limit theorems for such functionals.
	
\begin{theorem}
	\label{thm:CLT}
In the set-up of \Cref{thm:variancebounds}, it holds that
	\begin{equation*}
\sup_{u\in\R}|\P(W_\ell(K_n)\leq u)-\P(N\leq u)|	\ll  n^{-{\frac{1}{2}}}(\log{n})^{3+{\frac{6}{d-1}}}, \quad \ell\in\{1,\dots,d\},
	\end{equation*}
	where \(N\) is a standard Gaussian random variable. In particular, \(W_{\ell}(K_n)\) converges in distribution to \(N\), as \(n\to\infty\).
\end{theorem}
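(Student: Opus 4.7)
The plan is to apply a general normal approximation bound for functionals of $n$ independent and identically distributed inputs from \cite{LachiezeRey}. That bound controls the Kolmogorov distance to a standard Gaussian in terms of fourth moments of the first and second order difference operators $D_i V_\ell(K_n)$ and $D^2_{i,j}V_\ell(K_n)$ obtained by resampling the $i$-th (resp.\ $i$-th and $j$-th) point $X_i$, divided by appropriate powers of $\sigma^2 = \Var[V_\ell(K_n)]$. The presence of the polylogarithmic factor $(\log n)^{3+6/(d-1)}$ in the target rate is a strong indication that these moments are to be estimated via a radius of stabilization with sub-polynomial tails, as is typical for stabilizing functionals on a binomial input.

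The first step is to control $\E[(D_i V_\ell(K_n))^p]$ and $\E[(D^2_{i,j}V_\ell(K_n))^p]$ for $p\in\{2,4\}$ by localizing the dependence of $V_\ell(K_n)$ on a single vertex $X_i$ to the caps of $K$ cut off by the facet hyperplanes of $K_n$ meeting at $X_i$. To this end, I would use the surface body of $K$, i.e.\ the union of all points lying in some cap of $\mathcal{H}^{d-1}$-measure at most $s$, in the same spirit as the treatment of the volume in \cite{Thale2017} and the proof of \Cref{thm:variancebounds}. Choosing $s$ of order $c(\log n)/n$ with $c$ sufficiently large, the probability that the minimal cap containing $X_i$ has $\mathcal{H}^{d-1}$-measure greater than $s$ decays faster than any polynomial in $n$. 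On the complementary high-probability event, the change in $V_\ell$ is bounded by the intrinsic volumes of a cap of $\mathcal{H}^{d-1}$-measure at most $s$, which for $K\in\mathcal{K}^2_+$ admit classical estimates in terms of powers of the cap diameter. This should produce moment bounds of the correct polynomial order in $n^{-1}$ with controlled logarithmic corrections.

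Combining these estimates with the lower variance bound $\Var[V_\ell(K_n)] \gg n^{-(d+3)/(d-1)}$ from \Cref{thm:variancebounds} and plugging into the abstract inequality of \cite{LachiezeRey}, while carefully tracking the logarithmic losses produced by the stabilization radius, should give the claimed rate; convergence in distribution follows immediately. The main obstacle I expect is the passage from the volume case $\ell=d$, addressed in \cite{Thale2017}, to arbitrary $\ell\in\{1,\dots,d\}$. For $\ell<d$ the variation of $V_\ell$ under resampling of one vertex is not simply the volume of a cap but involves lower-dimensional intrinsic volumes of the modified portion of $\partial K_n$; bounding their $p$-th moments sharply enough so that the final exponent of $\log n$ is precisely $3+6/(d-1)$ and not worse is the delicate technical point, and will require combining curvature-sensitive cap estimates of \cite{BFH2013,Reitzner2002} with the stabilization-radius argument in a single, uniform bound across $\ell$.
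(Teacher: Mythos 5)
Your high-level plan matches the paper: apply the Lachi\`eze-Rey--Peccati normal approximation bound (\Cref{prop:normalbound}), localize via the surface body (\Cref{lemma:surface-body}) so that the first-order difference is supported with high probability on a small cap, combine with the variance bound of \Cref{thm:variancebounds}, and track the logarithmic losses. This is indeed what the paper does. However, there is a genuine gap precisely at the point you flag as ``the delicate technical point,'' and your proposed way of closing it would not work.

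You propose to bound the first-order difference $D_1 V_\ell(K_n)$ on the high-probability event by ``the intrinsic volumes of a cap'' of the relevant size, i.e.\ by $V_\ell$ of a cap of height $h\asymp\tau_n^{2/(d-1)}$, which gives $D_1V_\ell\ll\tau_n^{(\ell+1)/(d-1)}$. This estimate is too weak for $\ell<d$. For example, plugging it into the $\gamma_4$-term of \Cref{prop:normalbound} yields
\begin{equation*}
\frac{n}{(\Var[V_\ell(K_n)])^{3/2}}\,\gamma_4 \ll n^{\,1+\frac{3(d+1-2\ell)}{2(d-1)}}(\log n)^{\frac{3(\ell+1)}{d-1}},
\end{equation*}
whose exponent of $n$ is negative only when $\ell>(3d+1)/4$; for $\ell=1$ it is $5/2>0$ and the bound diverges. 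The essential idea that makes the theorem hold uniformly in $\ell$ --- and which is absent from your proposal --- is to use Kubota's formula \eqref{eq.Kubota} to write $D_1V_\ell(K_n)$ as an integral over $G(d,\ell)$ of $\vol_\ell\bigl((K_n|L)\setminus([X_2,\dots,X_n]|L)\bigr)$ and then observe, via the construction of the enclosing cap $C$ and \cite[Equation~(21)]{Barany2010}, that this integrand vanishes unless $\sphericalangle(X_1,L)\ll V_d(C)^{1/(d+1)}\ll\tau_n^{1/(d-1)}$. By \Cref{lemma:measure} the Grassmannian measure of such $L$ is $\ll\tau_n^{(d-\ell)/(d-1)}$, and multiplying by the projected cap volume $\ll\tau_n^{(\ell+1)/(d-1)}$ gives the $\ell$-independent bound $D_1V_\ell(K_n)\ll\tau_n^{(d+1)/(d-1)}$, which is exactly what makes the final exponents come out right for every $\ell$.

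A secondary gap: you do not explain how to estimate $\gamma_1$ and $\gamma_2$, which require controlling $\P(D_{1,2}f\neq 0)$. The paper handles this via visibility regions, showing that on the good event $\{D_{1,2}f(Y)\neq 0\}\subseteq\{Y_2\in\bigcup_{x\in\Vis_{Y_1}(\tau_n)}\Vis_x(\tau_n)\}$, and then using \Cref{lemma-Vu} and \Cref{relation-of-caps} to bound the probability by $\ll\tau_n$. Without an explicit mechanism for the second-order terms, the proposal as written does not close.
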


Note that the rate of convergence in \Cref{thm:CLT} does not depend on \(\ell\). Moreover, the same rate of convergence was already obtained in \cite{Thale2017} for the case \(\ell=d\).

\bigskip

The paper is organised as follows. In \Cref{Sec:Background} we introduce the notation and recall some background material from convex geometry, results concerning the surface and floating bodies and the normal bound from \cite{LachiezeRey} that will be used in the proof of \Cref{thm:CLT}. In \Cref{Sec:lowerbound} we present the geometric construction needed for the proof of the lower bounds of \Cref{thm:variancebounds} and the proof itself. In \Cref{Sec:upperbounds} we prove the upper bounds of \Cref{thm:variancebounds} by means of the Efron-Stein jackknife inequality and we also prove \Cref{thm:strongnumbers}, which directly follows from the former. Finally, in \Cref{Sec:centrallimitheorems} we give the proof of \Cref{thm:CLT}.


\section{Background material}\label{Sec:Background}

\subsection{General notation}

The closed Euclidean ball of radius $r$ centred at $x\in\R^d$ is denoted by $B^d(x,r)$, and $B^d=B^d(0,1)$ stands for the centred Euclidean unit ball. The boundary of \(B^d\) is indicated with \(\SS^{d-1}\). Moreover, the volume of $B^d$ is denoted by $\kappa_d=\pi^{d/2}\Gamma(1+{\frac{d}{2}})^{-1}$. For a finite set $A=\{x_1,\ldots,x_n\}\subset \R^d$, the convex hull of $A$ is denoted by $[x_1,\ldots,x_n]$. The vectors $e_1,\dots,e_d$ represent the standard orthonormal basis of $\R^d$. We indicate with \(\sphericalangle(u,v) \) the angle between two vectors \(u,v\) \(\in\R^d\). For a linear subspace \(V\) of \(\R^d\), we define \( \sphericalangle(u,V)\coloneqq \inf\{\sphericalangle(u,v): v\in V\}\). Given a subset \(U\in\R^d,\) its projection onto \(\R^{d-1}\) is denoted by \(\mathrm{proj}_{\R^{d-1}}U=\{x\in\R^{d-1}: (x,y)\in U\text{ for some } y\in\R\}\). For a function \(f\colon \R^d\to\R\), we say \(f\in\mathcal{C}^2\) if it is twice differentiable with continuous second order partial derivatives.
\bigskip 

Let $u\in\R^d$ and $h\in\R$. We denote by $H(u,h)$ the hyperplane $\{x\in\R^d : \langle x,u\rangle=h\}$. The corresponding halfspace $\{x\in\R^d : \langle x,u\rangle\geq h\}$ is denoted by
$H^+(u,h)$. Often one describes a convex body by its support function. The support function of $K$ is defined by
\begin{equation*}
h_K(u)=\sup\{\langle x,u\rangle : x\in K\}, \quad u\in\SS^{d-1}.
\end{equation*}
Since $K\in\mathcal{K}_{+}^2$, there exists a unique unit outward normal $u_x$ for each $x\in\partial K$.
The intersection of $K$ with $H^+(u_x,h_K(u_x)-h)$ is denoted by
$C^K(x,h)$. We call $C^K(x,h)$ a cap of $K$ at $x$ of height $h$. A cap $C^K$ is called an
$\varepsilon$-cap if $V_d(C^K)=\varepsilon$, where $V_d(\cdot)$ denotes the $d$-dimensional 
volume. Analogously, a cap $C^K$ with 
$\mathcal{H}^{d-1}(C^K\cap \partial K)=\varepsilon$ is called an $\varepsilon$-boundary cap.

\bigskip 

Let $\ell\in\{0,\dots,d\}$. We denote by $G(d,\ell)$ the Grassmannian of all $\ell$-dimensional linear subspaces of $\R^d$, which is supplied with the unique Haar probability measure $\nu_{\ell}$, see \cite{Schneider2014}. For $L\in G(d,\ell)$, we write $\vol_{\ell}(K|L)$ to indicate the $\ell$-dimensional Lebesgue measure of the orthogonal projection of $K$ onto $L$. Then, the $\ell\text{-th}$ intrinsic volume of a convex body $K$ can be defined as
\begin{equation}\label{eq.Kubota}
V_{\ell}(K) := \binom{d}{\ell} \frac{\kappa_d}{\kappa_{\ell}\kappa_{d-\ell}}\int_{G(d,\ell)}\!\!\!\vol_{\ell}(K|L)\,\nu_{\ell}(\dint L)\,.
\end{equation}
In particular, $V_d(K)$ is the ordinary volume (Lebesgue measure), $V_{d-1}(K)$ is half of the surface area, $V_1(K)$ is a constant multiple of the mean width and $V_0(K)$ is the Euler-characteristic of $K$. 

\bigskip

We define the function $v : K\to\R$ by
\begin{equation*}
  v(x)\coloneqq\min\{V_d(K\cap H) : H\ \text{is a half space in}\ \R^d \ \text{containing}\ x\}.
\end{equation*}
Then, the set
\begin{equation*}
  K(v\geq t)\coloneqq\{x\in K : v(x)\geq t\}
\end{equation*}
is called the floating body of $K$ with parameter $t>0$. The wet part of $K$ is defined by
\begin{equation*}
  K(t)=K(v\leq t)\coloneqq\{x\in K : v(x)\leq t\}.
\end{equation*}
In a similar way, we define the function $s: K\to\R$ by
\begin{equation*}
s(x):=\min\{\mathcal{H}^{d-1}(\partial K\cap H): H\ \text{is a half space in}\ \R^d \
\text{containing}\ x\}.
\end{equation*}
The surface body of $K$ with parameter $t>0$ is defined by
\begin{equation*}
K(s\geq t):=\{x\in K : s(x)\geq t\}.
\end{equation*}
Analogously, we set
\begin{equation*}
K(s\leq t):=\{x\in K : s(x)\leq t\}.
\end{equation*}
We define the visibility region (with respect to \(s\)) of a point \(z\in\partial K\) with parameter \(t>0\) as
\begin{equation*}
	\Vis_z(t) := \{x\in K(s\leq t) :[x,z]\cap K(s\geq t)=\emptyset\},
\end{equation*}
where \([x,z]\) denotes the closed line segment which connects \(x\) and \(z\).

\bigskip

We use the convention that constants with the same subscript may differ from section to section.

\subsection{Geometric tools}
\label{sec:geometric}
The concept of the surface body is convenient in view of \Cref{lemma:surface-body}, which clarifies its connection with the random polytope \(K_n\).

\begin{lemma}
	\label{lemma:surface-body}
	
	\textup{\cite[Lemma~4.2]{Richardson2008}} For all $\alpha\in(0,\infty)$, there 
	exists a constant $c_{\alpha}\in(0,\infty)$ only depending on $\alpha$ such that
\begin{equation*}
	\P(K(s\geq\tau_n)\not\subseteq K_n)\leq n^{-\alpha},
\end{equation*}
where
\begin{equation*}
\tau_n:=c_{\alpha}\frac{\log{n}}{n}.
\end{equation*}
\end{lemma}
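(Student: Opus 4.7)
The plan is to combine the standard "no point falls in a large cap" event with an economic cap covering argument.

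First, I would reduce to a statement about caps. If $K(s\geq \tau_n)\not\subseteq K_n$, then there is some $x\in K(s\geq\tau_n)$ with $x\notin K_n$. Since $K_n$ is closed and convex, there is a hyperplane separating $x$ from $K_n$; let $C$ denote the cap of $K$ cut off on the side of $x$. Then $C$ contains $x$, none of the random points $X_1,\ldots,X_n$ lies in $C$, and, because $x\in K(s\geq\tau_n)$, we have $\mathcal{H}^{d-1}(C\cap\partial K)\geq \tau_n$. So the event in question is contained in the event that some cap $C$ with $\mathcal{H}^{d-1}(C\cap\partial K)\geq\tau_n$ misses all $X_i$.

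Second, the main step is an \emph{economic cap covering}: I would show that there is a collection $\mathcal{F}=\{C_1,\ldots,C_N\}$ of caps of $K$ with $N\leq c_1 n^{\beta}$ (some polynomial bound in $n$) and a constant $\eta\in(0,1)$, independent of $n$, such that any cap $C$ with $\mathcal{H}^{d-1}(C\cap\partial K)\geq \tau_n$ contains some $C_i\in\mathcal{F}$ with $\mathcal{H}^{d-1}(C_i\cap\partial K)\geq \eta\,\tau_n$. Such a family can be built by taking a suitable net on $\partial K$ at boundary-scale comparable to $\tau_n^{1/(d-1)}$ and attaching caps of boundary-surface about $\tau_n$ rooted at each net point; the fact that $K\in\mathcal{K}^2_+$ means the boundary is locally, up to constants, a paraboloid, so caps of surface area $\tau_n$ have diameter $\Theta(\tau_n^{1/(d-1)})$ and any cap of surface area $\geq\tau_n$ must contain one of the test caps up to a universal constant. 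This is the standard Macbeath-type machinery transferred from volume to the $(d-1)$-dimensional boundary measure, and I expect this step to be the main obstacle: verifying that such a covering works with surface (rather than volume) measure requires the curvature assumption to control the geometry uniformly on $\partial K$.

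Third, I would apply a union bound. For each $C_i\in\mathcal{F}$, the probability that no $X_j$ lies in $C_i$ is $(1-\mathcal{H}^{d-1}(C_i\cap \partial K))^n\leq (1-\eta\tau_n)^n\leq \exp(-\eta\,c_\alpha\log n)=n^{-\eta c_\alpha}$. Summing over $\mathcal{F}$ yields
\begin{equation*}
\P(K(s\geq\tau_n)\not\subseteq K_n)\leq c_1\, n^{\beta}\cdot n^{-\eta c_\alpha}=c_1\, n^{\beta-\eta c_\alpha}.
\end{equation*}
Finally, choosing $c_\alpha\geq (\alpha+\beta)/\eta$ (and absorbing $c_1$ by taking $n$ large, at the cost of adjusting $c_\alpha$) produces the desired bound $n^{-\alpha}$, completing the argument.
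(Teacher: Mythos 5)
The paper itself does not prove this lemma; it is imported verbatim from Richardson, Vu and Wu (2008, Lemma~4.2), so there is no internal proof to compare against. Assessed on its own merits, your outline is correct and is the standard route: (i) if $K(s\geq\tau_n)\not\subseteq K_n$, a separating hyperplane produces a cap $C$ with $\mathcal{H}^{d-1}(C\cap\partial K)\geq\tau_n$ that contains none of $X_1,\dots,X_n$; (ii) build a polynomial-size family of test caps so that every such $C$ contains a test cap of boundary measure $\geq\eta\tau_n$; (iii) union bound and take $c_\alpha$ large. Two small points worth making explicit. First, step (ii) does require the $\mathcal{K}^2_+$ hypothesis, and you rightly flag it: the curvature bounds (Lemma~\ref{lemma:approx}) ensure that a cap of height $h$ has boundary footprint containing and contained in boundary balls of radius $\Theta(\sqrt h)$ centred at the base point, so boundary measure $\geq\tau_n$ forces the footprint to contain a boundary ball of radius $\gg\tau_n^{1/(d-1)}$; a net of $O(\tau_n^{-1})=O(n/\log n)$ boundary points at scale $\tau_n^{1/(d-1)}$, with a fixed test cap of boundary measure $\Theta(\tau_n)$ at each, then works, and the volume-to-surface transfer you worry about is handled exactly by Lemma~\ref{relation-of-caps}. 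Second, in step (iii) the bound is $c_1 n^{\beta-\eta c_\alpha}$, which is $\leq n^{-\alpha}$ for all $n\geq 2$ only after folding $c_1$ into the choice of $c_\alpha$ (e.g., $c_\alpha\geq(\alpha+\beta+\log_2 c_1)/\eta$); you note this, and it is fine since $c_\alpha$ is a free parameter. In short: correct, and in the same spirit as the cited source.
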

In the following, we present some well-known geometric results in order to keep our presentation reasonably self-contained. For every point $x\in\partial K$, there exists a paraboloid $Q_x$, given by a quadratic form $b_{Q_x}$, osculating at $x$.
The following precise description of the local behaviour of the boundary 
of a convex body $K\in\mathcal{K}_{+}^2$ is due to Reitzner \cite{Reitzner2002}.

\begin{lemma}\textup{\cite[Lemma 6]{Reitzner2002}}\label{lemma:approx}
	Let $K\in\mathcal{K}_{+}^2$  and choose $\delta> 0$ sufficiently small. 
	Then, there exists a $\lambda> 0$, only depending on $\delta$ and $K$, such 
	that for each $x\in\partial K$ the following holds. Identify the hyperplane 
	tangent to $K$ at $x$ with $\R^{d-1}$ and $x$ with the origin. The $\lambda$-neighbourhood 
	$U^{\lambda}$ of $x$ in $\partial K$ defined by $\proj_{\R^{d-1}}U^{\lambda}=\lambda B^{d-1}$ 
	can be represented by a convex function $f^{(x)}(y)\in\mathcal{C}^2$, i.e., 
	$(y,f^{(x)}(y))\in\partial K$ for $y\in\lambda B^{d-1}$. Denote by $f_{ij}^{(x)}(0)$ the second partial derivatives of $f^{(x)}$ at the origin. Then, 
	\begin{equation*}
	b_{Q_x}(y)=\frac{1}{2}\sum_{i,j}f_{ij}^{(x)}(0)y_iy_j
	\end{equation*}
	and it holds that
	\begin{equation*}
	(1+\delta)^{-1}b_{Q_x}(y)\leq f^{(x)}(y)\leq (1+\delta)\,b_{Q_x}(y)
	\end{equation*}
	for $y\in\lambda B^{d-1}$.
\end{lemma}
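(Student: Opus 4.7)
The plan is to realize $f^{(x)}$ as the local graph representation of $\partial K$ over the tangent hyperplane $T_x\partial K$, apply a quantitative Taylor expansion, and then promote the estimates to be uniform in $x$ by exploiting the compactness of $\partial K$ together with the $\mathcal{K}^2_+$ hypothesis. First I would fix $x\in\partial K$ and identify $T_x\partial K$ with $\R^{d-1}$ so that $x$ corresponds to the origin and the outward normal $u_x$ corresponds to $e_d$. Since $K\in\mathcal{K}^2_+$, the boundary $\partial K$ can locally be written as a level set of a $\mathcal{C}^2$ function with non-vanishing gradient at $x$, and the implicit function theorem produces a neighbourhood $\lambda(x)B^{d-1}$ of the origin and a unique $\mathcal{C}^2$ function $f^{(x)}\colon\lambda(x)B^{d-1}\to\R$ whose graph coincides with $\partial K$ in that neighbourhood. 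The convexity of $K$ forces $f^{(x)}$ to be convex, and the choice of chart gives $f^{(x)}(0)=0$ and $\nabla f^{(x)}(0)=0$.

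Next I would invoke Taylor's theorem to write
\begin{equation*}
f^{(x)}(y)=b_{Q_x}(y)+R^{(x)}(y),\qquad R^{(x)}(y)=\tfrac{1}{2}\sum_{i,j}\bigl(f^{(x)}_{ij}(\xi)-f^{(x)}_{ij}(0)\bigr)y_iy_j,
\end{equation*}
for some $\xi=\xi(x,y)$ on the segment from $0$ to $y$. Because $f^{(x)}\in\mathcal{C}^2$, the remainder satisfies $R^{(x)}(y)=o(|y|^2)$ as $|y|\to 0$. The assumption of strictly positive Gaussian curvature translates into positive definiteness of the Hessian $\bigl(f^{(x)}_{ij}(0)\bigr)_{i,j}$, so $b_{Q_x}(y)\geq \mu(x)|y|^2$ for some $\mu(x)>0$.

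To obtain a single $\lambda$ independent of $x$, I would argue by compactness of $\partial K$: the maps $x\mapsto f^{(x)}_{ij}(0)$ are continuous in $x$, so the minimum eigenvalue $\mu(x)$ of the Hessian is a continuous positive function on the compact set $\partial K$ and admits a uniform lower bound $\mu_*>0$. Moreover, a uniform modulus of continuity for the second partial derivatives of $f^{(x)}$ can be extracted in the same way, yielding some $\eta(\delta)>0$, depending only on $\delta$ and $K$, such that $|f^{(x)}_{ij}(\xi)-f^{(x)}_{ij}(0)|\leq \delta\,\mu_*$ whenever $|\xi|\leq \eta(\delta)$ and $x\in\partial K$. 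Setting $\lambda:=\eta(\delta)$ would then give $|R^{(x)}(y)|\leq\delta\,b_{Q_x}(y)$ on $\lambda B^{d-1}$ uniformly in $x$, which, after rearrangement, is the claimed two-sided bound with factor $(1+\delta)^{\pm 1}$.

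The main obstacle is this uniformity step: for a fixed $x$ all the estimates follow directly from Taylor's theorem, but producing a single $\lambda$ valid for every $x\in\partial K$ requires the continuous dependence of the local chart $f^{(x)}$ on the base point $x$. This can be done, for instance, by covering $\partial K$ by finitely many coordinate neighbourhoods and using the $\mathcal{C}^2$-regularity of $\partial K$ to obtain uniform control over the second derivatives of the corresponding transition functions; once this is in place, the remainder estimate and the lower bound on the curvature become uniform, and the statement follows.
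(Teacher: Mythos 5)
The paper does not supply a proof of this lemma: it is a verbatim citation of Reitzner~(2002), Lemma~6, so there is no internal argument to compare your proposal against. Your route is the natural one and presumably matches the cited source: represent $\partial K$ locally as a $\mathcal{C}^2$ graph over the tangent hyperplane, apply the exact Lagrange form of the second-order Taylor remainder, use positive definiteness of the Hessian (equivalently, positivity of the principal curvatures) to get $b_{Q_x}(y)\geq\mu_*|y|^2$ uniformly, and obtain a uniform modulus of continuity of the second derivatives by compactness of $\partial K$. As a proof sketch this is essentially sound.

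Two small points deserve cleanup. First, the arithmetic at the end: $|R^{(x)}(y)|\leq\delta\,b_{Q_x}(y)$ gives $f^{(x)}\leq(1+\delta)b_{Q_x}$ but only $f^{(x)}\geq(1-\delta)b_{Q_x}$, which is strictly weaker than the stated $(1+\delta)^{-1}b_{Q_x}$; you should instead arrange $|R^{(x)}(y)|\leq\frac{\delta}{1+\delta}\,b_{Q_x}(y)$ (a harmless rescaling of $\delta$, but worth noting). Also, once you only control $|f^{(x)}_{ij}(\xi)-f^{(x)}_{ij}(0)|\leq\varepsilon$ componentwise you pick up a dimensional factor in bounding $|R^{(x)}(y)|$, so $\varepsilon$ needs to be chosen a constant multiple smaller. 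Second, $x\mapsto f^{(x)}_{ij}(0)$ is not literally a well-defined continuous function, since identifying $T_x\partial K$ with $\R^{d-1}$ involves a choice of orthonormal frame that cannot be made globally continuous; the intrinsic object is the second fundamental form, whose eigenvalues are continuous and positive in $x$, and whose smallest eigenvalue supplies the uniform $\mu_*>0$. Your compactness sketch for the uniform modulus of continuity is the right idea, and it should be noted that the same argument also yields a uniform radius $\lambda$ on which the graph representation is even defined (bounded curvature keeps $u_z$ transverse to the hyperplane tangent at $x$ for $z$ near $x$), a fact the statement implicitly requires.
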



In the next Lemma we state two well-known relations regarding $\varepsilon$-caps and $\varepsilon$-boundary caps. 
\begin{lemma}\textup{\cite[Lemma 6.2]{Richardson2008}}\label{relation-of-caps}
	For a given $K\in\mathcal{K}_{+}^2$, there exist constants $\varepsilon_0, c_1, c_2>0$
	such that for all $0<\varepsilon<\varepsilon_0$ we have that for any $\varepsilon$-cap $C^K$ of
	$K$,
	\begin{equation*}
	c_1^{-1}\varepsilon^{(d-1)/(d+1)}\leq\mathcal{H}^{d-1}(C^K \cap \partial K)\leq c_1\varepsilon^{(d-1)/(d+1)}
	\end{equation*}
	and for any $\varepsilon$-boundary cap $\widetilde{C}^K$ of K,
	\begin{equation*}
	c_2^{-1}\varepsilon^{(d+1)/(d-1)}\leq V_d(\widetilde{C}^K)\leq c_2\varepsilon^{(d+1)/(d-1)}.
	\end{equation*}
\end{lemma}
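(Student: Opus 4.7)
The plan is to reduce both assertions to an explicit computation for an osculating paraboloid via \Cref{lemma:approx}. Fix a small $\delta>0$ and the corresponding $\lambda>0$. For any cap $C^K(x,h)$ of sufficiently small height $h$, I identify the tangent hyperplane at $x$ with $\R^{d-1}$; since $x$ is the apex of the cap by definition, the projection of $\partial K\cap C^K(x,h)$ onto $\R^{d-1}$ is contained in $\lambda B^{d-1}$ once $h$ is small, and there $\partial K$ is the graph of a $\mathcal{C}^2$ convex function $f^{(x)}$ sandwiched between $(1+\delta)^{\pm 1}b_{Q_x}$. Since $K\in\mathcal{K}_+^2$ and $\partial K$ is compact, the principal curvatures of $K$ at $x$ lie in a fixed positive compact interval, so the estimates below are uniform in $x$.

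The main step is then a calculation for the paraboloid $b_{Q_x}$. After diagonalizing the Hessian, write $b_{Q_x}(y)=\tfrac{1}{2}\sum_{i=1}^{d-1}\kappa_i(x)y_i^2$ with $\kappa_i(x)$ the principal curvatures. A paraboloid cap of height $h$ has base $\{y:b_{Q_x}(y)\le h\}$, an ellipsoid in $\R^{d-1}$ whose volume is proportional to $h^{(d-1)/2}$; its $d$-volume, given by $\int_{\mathrm{base}}(h-b_{Q_x}(y))\,\dint y$, is proportional to $h^{(d+1)/2}$; and its graph area $\int_{\mathrm{base}}\sqrt{1+|\nabla b_{Q_x}(y)|^2}\,\dint y$ differs from the base volume by a factor in $[1,\sqrt{1+ch}]$, because $|\nabla b_{Q_x}(y)|^2\le c\,h$ on the base, so it is also proportional to $h^{(d-1)/2}$ for $h$ small. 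The proportionality constants depend only on the fixed two-sided bounds on the $\kappa_i(x)$.

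Transferring these estimates from $Q_x$ to $\partial K$ via \Cref{lemma:approx} costs only a multiplicative factor of at most $(1+\delta)^{\pm(d+1)}$. If $C^K$ is an $\varepsilon$-cap, solving $\varepsilon\asymp h^{(d+1)/2}$ gives $h\asymp \varepsilon^{2/(d+1)}$, so
\[
  \mathcal{H}^{d-1}(C^K\cap\partial K)\asymp h^{(d-1)/2}\asymp \varepsilon^{(d-1)/(d+1)}.
\]
Symmetrically, for an $\varepsilon$-boundary cap $\widetilde{C}^K$, solving $\varepsilon\asymp h^{(d-1)/2}$ gives $h\asymp \varepsilon^{2/(d-1)}$, hence $V_d(\widetilde{C}^K)\asymp h^{(d+1)/2}\asymp \varepsilon^{(d+1)/(d-1)}$. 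The normalization $\mathcal{H}^{d-1}(\partial K)=1$ contributes only a global positive factor absorbed into $c_1,c_2$.

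The main obstacle is establishing \emph{uniform} constants across different caps, whose apices $x$ and associated principal curvatures vary. Uniformity of $\lambda$ is already part of \Cref{lemma:approx}, and uniform positive two-sided bounds on the $\kappa_i(x)$ follow from the compactness of $\partial K$ together with the $\mathcal{K}_+^2$ assumption. The remaining check that the base of the true cap is sandwiched between the bases of paraboloid caps of heights $(1+\delta)^{\pm 1}h$ up to the same $(1+\delta)$-factor is routine from the pinch on $f^{(x)}$, and the constraint on $\varepsilon_0$ is determined by requiring $h$ to be small enough for the $\lambda$-neighbourhood approximation to apply.
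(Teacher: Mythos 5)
The paper does not reprove this lemma — it is imported verbatim from Richardson, Vu and Wu \cite[Lemma~6.2]{Richardson2008} — so there is no in-paper proof to compare against. Evaluated on its own, your argument is the standard one and is essentially correct: reduce to the osculating paraboloid via \Cref{lemma:approx}, compute that a height-$h$ paraboloid cap has base volume $\asymp h^{(d-1)/2}$, $d$-volume $\asymp h^{(d+1)/2}$, and lateral surface area $\asymp h^{(d-1)/2}$, transfer these estimates to $\partial K$, and eliminate $h$ to get the stated exponents. The uniformity in $x$ via compactness of $\partial K$ and the $\mathcal{K}_+^2$ hypothesis, and the role of $\varepsilon_0$ in keeping $h$ small enough for the $\lambda$-neighbourhood to apply, are both correctly identified.

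One step is glossed over and deserves a word. You compute the graph area of $b_{Q_x}$ and then assert that \Cref{lemma:approx} transfers it to $\partial K$ at a cost of $(1+\delta)^{\pm(d+1)}$; but that lemma pinches the \emph{values} of $f^{(x)}$ against $b_{Q_x}$, not the gradients, and the Hausdorff measure $\mathcal{H}^{d-1}(C^K\cap\partial K)=\int_{\mathrm{base}}\sqrt{1+|\nabla f^{(x)}|^2}\,\dint y$ needs control of $\nabla f^{(x)}$, not of $f^{(x)}$. The fix is easy and you almost say it: since $K\in\mathcal{K}_+^2$ and $\partial K$ is compact, the second derivatives of $f^{(x)}$ are uniformly bounded, and $\nabla f^{(x)}(0)=0$, so $|\nabla f^{(x)}(y)|\ll|y|\ll\sqrt{h}$ on the base; hence the Jacobian $\sqrt{1+|\nabla f^{(x)}|^2}$ is between $1$ and $1+O(h)$ and $\mathcal{H}^{d-1}(C^K\cap\partial K)\asymp\mathrm{vol}_{d-1}(\mathrm{base})$ directly, without detouring through the paraboloid's surface area at all. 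With that patch the proof is complete and uniform in $x$.
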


For the next geometrical Lemma we assume that $\varepsilon$ is sufficiently small.
\begin{lemma}\label{lemma-Vu}\textup{\cite[Lemma 6.2]{Vu2005}}
Let $x$ be a point on the boundary of $K$ and $D(x,\varepsilon)$ the set of all points on the 
boundary which are of distance at most $\varepsilon$ to $x$. Then, the convex hull of $D(x,\varepsilon)$ 
has volume at most $c_3\varepsilon^{d+1}$, where $c_3>0$ is a constant.
\end{lemma}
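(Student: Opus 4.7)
The plan is to reduce everything to the local quadratic model for $\partial K$ near $x$ that is provided by \Cref{lemma:approx}, and then estimate $\conv(D(x,\varepsilon))$ by a flat cylinder.

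First, I would fix $\delta>0$ small and take $\varepsilon$ small enough that $B^d(x,\varepsilon)\cap \partial K$ lies inside the $\lambda$-neighbourhood $U^\lambda$ of $x$ furnished by \Cref{lemma:approx} (this is possible since $K$ is compact and $\mathcal{C}_+^2$, so $\lambda$ can be chosen uniformly in $x\in\partial K$). Translate and rotate coordinates so that $x=0$ and the tangent hyperplane $T_xK$ is identified with $\R^{d-1}\times\{0\}$, with $K$ lying locally in the halfspace $\{y_d\ge 0\}$. Then $\partial K$ is the graph of the convex function $f=f^{(x)}$ on $\lambda B^{d-1}$ given by the lemma.

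Next, I would control the projection and the height separately. Any point $p=(y,f(y))\in D(x,\varepsilon)$ satisfies $|y|^2+f(y)^2\le \varepsilon^2$, so in particular $\proj_{\R^{d-1}}D(x,\varepsilon)\subseteq \varepsilon B^{d-1}$. For the height, Lemma \ref{lemma:approx} gives
\begin{equation*}
0\le f(y)\le (1+\delta)\,b_{Q_x}(y)\le (1+\delta)\,\Lambda_K |y|^2 \le (1+\delta)\Lambda_K\,\varepsilon^2,
\end{equation*}
where $\Lambda_K$ is a uniform upper bound on the largest eigenvalue of the Hessians $(f_{ij}^{(x)}(0))_{i,j}$ as $x$ ranges over $\partial K$; such a bound exists because the principal curvatures are continuous on the compact set $\partial K$ and $K\in \mathcal{K}_+^2$.

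The two bounds together place $D(x,\varepsilon)$ inside the cylinder $\varepsilon B^{d-1}\times [0,C\varepsilon^2]$ with $C=(1+\delta)\Lambda_K$. This cylinder is convex, so $\conv(D(x,\varepsilon))$ is contained in it, and its volume is at most $\kappa_{d-1}\,\varepsilon^{d-1}\cdot C\varepsilon^2=c_3\,\varepsilon^{d+1}$, as required.

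The only real subtlety is arranging that the constants $\lambda$ and $\Lambda_K$ (and the threshold on $\varepsilon$) can be chosen uniformly in $x\in\partial K$; this is the main thing to verify, and it follows from compactness of $\partial K$ together with the hypothesis $K\in\mathcal{K}_+^2$. Everything else is a one-line cylinder estimate.
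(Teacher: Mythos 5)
The paper does not prove this lemma itself; it simply cites \cite[Lemma 6.2]{Vu2005}, so there is no in-text argument to compare against. Your self-contained proof is correct and uses exactly the ingredients the paper already has on hand: by \Cref{lemma:approx}, $\lambda$ can be chosen uniformly in $x$, and since $K\in\mathcal{K}^2_+$ is compact with everywhere positive curvature, the Hessians $(f^{(x)}_{ij}(0))$ have a uniform upper eigenvalue bound, so after identifying $T_xK$ with $\R^{d-1}$ every point of $D(x,\varepsilon)$ has projection in $\varepsilon B^{d-1}$ and height in $[0,C\varepsilon^2]$. The box $\varepsilon B^{d-1}\times[0,C\varepsilon^2]$ is convex, so it also contains $\conv D(x,\varepsilon)$, and its volume is $C\kappa_{d-1}\varepsilon^{d+1}$, which is the claimed bound. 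One small point that deserves an explicit sentence rather than a parenthetical: you need $\varepsilon$ small enough not only that $\varepsilon<\lambda$ but also that $B^d(x,\varepsilon)$ misses the ``far side'' of $\partial K$ (this follows from a uniform lower bound on the inradius of $K$, again by $K\in\mathcal{K}^2_+$ and compactness), so that $D(x,\varepsilon)$ lies entirely in the graph patch $U^\lambda$ and your coordinate computation applies to all of it. With that spelled out, the argument is complete, and $c_3$ depends only on $d$ and the curvature bounds of $K$.
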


The following result is known as the economic cap covering theorem, see \cite{Barany2010, Barany1988}.
\begin{proposition}\label{economic-cap-theorem}\textup{\cite[Theorem 4]{Barany2010}}
	Assume that $K$ is a convex body with unit volume and let $0<t<t_0=(2d)^{-2d}$. 
	Then, there are caps $C_1,\dots,C_m$ and pairwise disjoint convex sets 
	$C'_1,\dots,C'_m$ such that $C'_i\subset C_i$ for each $i$, and
	\begin{enumerate}
		\item $\bigcup_{i=1}^m C'_i\subset K(t)\subset\bigcup_{i=1}^m C_i$,
		\item $V_d(C'_i)\gg t$ and $V_d(C_i)\ll t$ for each $i$,
		\item for each cap $C$ with $C\cap K(v>t)=\emptyset$, there is a $C_i$ containing $C$.
	\end{enumerate}
\end{proposition}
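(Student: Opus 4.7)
The plan is to import the classical Macbeath-region machinery of B\'ar\'any and Larman. For each $x\in K$ define the \emph{Macbeath region} $M(x):=K\cap(2x-K)$ and its rescaling $M(x,\lambda):=x+(\lambda/2)(M(x)-x)$. Two standard facts do the heavy lifting: (a) if $x,y\in K$ sit at comparable depth and $M(x,1/2)\cap M(y,1/2)\neq\emptyset$, then $M(y)\subseteq M(x,c_0)$ for some absolute constant $c_0>1$; and (b) for every $x\in K$ there exist caps $C_x^-\subseteq M(x,c_1)\subseteq C_x^+$ whose volumes are all comparable to $V_d(M(x))$, and $V_d(M(x))\asymp t$ whenever $v(x)=t$. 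Fact (a) is the Macbeath packing lemma, and fact (b) is the cap/Macbeath comparison; both are geometric and use only convexity, not smoothness of $\partial K$.

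First I would fix a maximal collection $x_1,\dots,x_m$ on the floating-body boundary $\{v=t\}$ such that the rescaled regions $M(x_i,1/2)$ are pairwise disjoint. Setting $C_i':=M(x_i,1/2)$ (which is convex and, by the choice of $x_i$, contained in $K(t)$) and taking $C_i$ to be a cap that contains $M(x_i,c_0)$ and is furnished by (b), the volume comparisons in (b) together with $v(x_i)=t$ immediately give $V_d(C_i')\gg t$ and $V_d(C_i)\ll t$, settling item~(2). The inclusion $\bigcup_i C_i'\subseteq K(t)$ in item~(1) is then automatic, and the reverse inclusion $K(t)\subseteq\bigcup_i C_i$ is obtained by observing that for any $y\in K(t)$ maximality forces $M(y,1/2)$ to meet some $M(x_i,1/2)$, at which point (a) yields $y\in M(x_i,c_0)\subseteq C_i$.

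The main obstacle is item~(3). Given a cap $C$ disjoint from $K(v>t)$, I would pick a point $x\in C$ of maximal depth (so in particular $x\in K(t)$), again invoke maximality of the $x_i$'s to find $j$ with $M(x,1/2)\cap M(x_j,1/2)\neq\emptyset$, and use (a) to conclude $x\in M(x_j,c_0)$. Promoting this pointwise containment to the full cap inclusion $C\subseteq C_j$ is the delicate step: it rests on the fact that $V_d(C)\le t$ is of the same order as $V_d(M(x_j))$, together with the observation that a cap whose apex sits inside $M(x_j,c_0)$ and whose volume is comparable to $V_d(M(x_j))$ cannot extend far beyond $M(x_j,c_0')$ without its volume exceeding the allowed order. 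Making this last comparison quantitative — and choosing $c_0,c_1$ large enough from the outset so that the slack absorbs the various constants arising in (a) and (b) — is exactly where B\'ar\'any and Larman invest most of the work, and I would follow their argument verbatim.
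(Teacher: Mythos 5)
This proposition is stated in the paper purely as a citation (B\'ar\'any--Fodor--V\'igh, Theorem~4, itself going back to B\'ar\'any--Larman 1988); the paper contains no proof of it, so there is no in-paper argument to compare against. Your sketch is, correctly, the B\'ar\'any--Larman Macbeath-region approach, and the skeleton (maximal disjoint half-Macbeath regions on $\{v=t\}$, volume comparison $V_d(M(x))\asymp v(x)$, the packing lemma, the cap envelope of $M(x,c_1)$) is the right one.

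There is, however, a genuine gap in how you pass from covering $\{v=t\}$ to covering all of $K(t)$, which affects both the reverse inclusion in item~(1) and item~(3). You choose the $x_i$ to be a maximal family \emph{on} $\{v=t\}$ with $M(x_i,1/2)$ disjoint, and then claim that ``for any $y\in K(t)$ maximality forces $M(y,1/2)$ to meet some $M(x_i,1/2)$.'' This does not follow: maximality only speaks about candidate points on $\{v=t\}$. A point $y\in K(t)$ with $v(y)$ much smaller than $t$ lies near $\partial K$ and has $V_d(M(y))\asymp v(y)\ll t$, so $M(y,1/2)$ can easily be disjoint from every $M(x_i,1/2)$ (which all have volume $\asymp t$). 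The missing ingredient in B\'ar\'any--Larman is a radial projection step: send $y$ along the ray from the centroid to its hit point $y'\in\{v=t\}$, invoke maximality for $y'$ to find $j$ with $M(y',1/2)\cap M(x_j,1/2)\neq\emptyset$, and then use a monotonicity property of Macbeath regions along such rays ($y$ lies between $y'$ and $\partial K$, and $M(y)$ is absorbed into a bounded dilate of $M(y')$) to conclude $y\in M(x_j,c_0)\subseteq C_j$. The same projection is needed in item~(3) when you pick the deepest point $x\in C$: that point satisfies $v(x)\le t$ but need not lie on $\{v=t\}$, so ``maximality of the $x_i$'s'' again does not apply to $x$ directly. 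With this projection step supplied, your outline matches the original proof.
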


We conclude this section with a statement about the measure of the set of linear subspaces of
$\R^d$ that form a small angle with a fixed vector, which will be useful later.
\begin{lemma}\textup{\cite[Lemma 1]{Barany2010}}\label{lemma:measure}
	For fixed $z\in\SS^{d-1}$ and small $a>0$, 
	\begin{equation*}
	\nu_{\ell}(\{L\in G(d,\ell): \sphericalangle(z,L)\leq a\})=\Theta\bigl(a^{d-\ell}\bigr), \quad 
	\ell\in\{1,\dots,d\}.
	\end{equation*}
\end{lemma}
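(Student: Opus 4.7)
The plan is to reduce the computation to a standard density calculation via the rotational invariance of $\nu_\ell$. Since the Haar probability measure on $G(d,\ell)$ is invariant under $O(d)$, for any fixed $z \in \SS^{d-1}$ the distribution of $\sphericalangle(z,L)$ when $L \sim \nu_\ell$ agrees with the distribution of $\sphericalangle(Z, L_0)$, where $L_0 = \mathrm{span}(e_1,\dots,e_\ell)$ is a fixed reference subspace and $Z$ is a uniform random vector on $\SS^{d-1}$. This swap turns the Grassmannian problem into a straightforward computation on the sphere.

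Next, I would explicitly evaluate the angle. Writing $Z = (Z_1,\dots,Z_d)$ and using that the orthogonal projection onto $L_0$ is $\pi_{L_0}(Z) = (Z_1,\dots,Z_\ell,0,\dots,0)$, one has
\begin{equation*}
\sin^2 \sphericalangle(Z, L_0) \;=\; |Z - \pi_{L_0}(Z)|^2 \;=\; Z_{\ell+1}^2 + \cdots + Z_d^2.
\end{equation*}
It is classical that when $Z$ is uniform on $\SS^{d-1}$ (and $\ell < d$), the random variable $T := Z_{\ell+1}^2 + \cdots + Z_d^2$ follows a Beta distribution with parameters $\bigl(\tfrac{d-\ell}{2}, \tfrac{\ell}{2}\bigr)$, so its density is a constant multiple of $t^{(d-\ell)/2 - 1}(1-t)^{\ell/2 - 1}$ on $[0,1]$. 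Consequently
\begin{equation*}
\nu_\ell\bigl(\{L : \sphericalangle(z,L) \leq a\}\bigr) \;=\; \P(T \leq \sin^2 a) \;=\; \int_0^{\sin^2 a} c\, t^{(d-\ell)/2 - 1}(1-t)^{\ell/2 - 1}\, \dint t,
\end{equation*}
and a direct asymptotic as $a \downarrow 0$ (using $\sin^2 a \sim a^2$ and that the factor $(1-t)^{\ell/2-1}$ is bounded between two positive constants on $[0, \sin^2 a]$) yields the desired rate $\Theta(a^{d-\ell})$. The boundary case $\ell = d$ is trivial, since then the event has full measure $1 = \Theta(a^0)$.

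There is not really a hard obstacle here: the only subtle point is justifying the transfer from $\nu_\ell$ to the uniform measure on the sphere, which follows immediately from $O(d)$-invariance. The remaining estimate is a standard Beta-integral bound, so the proof will be short and essentially self-contained.
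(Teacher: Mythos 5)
The paper does not prove \Cref{lemma:measure}; it is quoted verbatim from \cite[Lemma~1]{Barany2010} and used as a black box, so there is no in-paper proof to compare your attempt against. On its own merits, your argument is correct and self-contained. The transfer to a spherical computation is justified by $O(d)$-invariance: for $g$ Haar on $O(d)$ one has $gL_0\sim\nu_\ell$ and $g^{-1}z$ uniform on $\SS^{d-1}$, while $\sphericalangle(z,gL_0)=\sphericalangle(g^{-1}z,L_0)$, which is exactly the distributional identity you invoke. The formula $\sin^2\sphericalangle(Z,L_0)=Z_{\ell+1}^2+\cdots+Z_d^2$ for a unit vector $Z$, the Beta$\bigl(\tfrac{d-\ell}{2},\tfrac{\ell}{2}\bigr)$ law for $T$ via the Gaussian representation $Z=W/\lVert W\rVert$, and the local asymptotics of the Beta integral at the left endpoint are all standard and correctly applied. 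Your remark that $(1-t)^{\ell/2-1}$ stays pinned between positive constants on $[0,\sin^2 a]$ is the right way to keep the estimate two-sided even when the exponent $\ell/2-1$ is negative, and $t^{(d-\ell)/2-1}$ is integrable at $0$ for every $\ell\le d-1$ (e.g.\ $\ell=d-1$ gives $t^{-1/2}$, still integrable and still yielding $\Theta(a)$), so there is no hidden divergence; the degenerate case $\ell=d$ is disposed of correctly. In short, this is a valid, short, probabilistic proof of a result the authors themselves outsource to \cite{Barany2010}.
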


\subsection{Bound for normal approximation}
Let \((\Omega, \mathcal{A},\P)\) be a fixed probability space. We indicate with \(\E\) the expectation and with \(\Var\) the variance with respect to \(\P\).  Let \(X\) and \(Y\) be two real-valued random variables defined on \((\Omega, \mathcal{A},\P)\) with distributions \(\P_X\) and \(\P_Y\), respectively. The Kolmogorov distance between $\P_X$ and $\P_Y$ is defined by 
\begin{equation*}
d_K(\P_X,\P_Y)=\sup_{u\in\R}|\P(X\leq u)-\P(Y\leq u)|.  
\end{equation*}
With a slight abuse of notation, we write \(d_K(X,Y)\) to indicate \(d_K(\P_X,\P_Y)\). It is important to recall that the Kolmogorov distance is a metrization of the convergence in distribution, i.e., given a sequence of random variables \((X_n)_{n\in\mathbb{N}}\) and another random variable \(Y\) such that \(\lim\limits_{n\to\infty} d_K(X_n,Y)=0\), then \((X_n)_{n\in\mathbb{N}}\) converges in distribution to \(Y\).

\bigskip

Let \(S\) be a Polish space. Consider a function \(f\colon\cup_{k=1}^n S^k\to\R\) that acts on the point configurations of at most \(n\in\mathbb{N}\) points of \(S\). Let \(f\) be measurable and symmetric, i.e., invariant under permutations of the arguments. In the setting of this paper, \(S\) is the boundary of a smooth convex body, while \(f\) is an intrinsic volume of the convex hull of its arguments. Given a point \(x=(x_1,\ldots,x_h)\in\cup_{k=1}^n S^k\), we indicate with \(x^i\) the vector obtained from \(x\) by removing its \(i\text{-th}\) coordinate, namely \(x^i\coloneqq(x_1,\ldots,x_{i-1},x_{i+1},\ldots,x_h)\). Analogously, we define \(x^{ij}\coloneqq(x_1,\ldots,x_{i-1},x_{i+1},\ldots,x_{j-1},x_{j+1},\ldots,x_h)\). 

\bigskip

We now define the first- and second-order difference operators, applied to \(f\), as 
\begin{equation*}
D_i f(x)\coloneqq f(x)-f(x^i)\quad\text{and}\quad D_{i,j}f(x)\coloneqq f(x)-f(x^i)-f(x^j)+f(x^{ij}),
\end{equation*}
respectively. We indicate with \(X=(X_1,\ldots,X_n)\) a random vector of elements of \(S\) on \((\Omega, \mathcal{A},\P)\). Let \(X'\) and \(\tilde{X}\) be independent copies of \(X\). A vector \(Z=(Z_1,\ldots,Z_n)\) is called a recombination of \(\{X,X',\tilde{X}\}\), whenever \(Z_i\in\{X_i,X_i',\tilde{X}_i\}\) for every \(i\in\{1,\ldots,n\}\). For a subset \(A\subseteq\{1,\ldots,n\}\) of the index set, we write \(X^{A}=(X_1^A,\ldots, X_n^A)\) with
\begin{equation*}
	X_i^A\coloneqq\begin{cases}
		X_i&: i\notin A,\\
		X_i'&: i\in A.
	\end{cases}
\end{equation*}
In order to rephrase the normal approximation bound from \cite{LachiezeRey}, it is convenient to define the following quantities, namely,
\begin{align*}
\gamma_1 & :=\sup_{(Y,Y',Z,Z')}\mathbf{E}\bigl[\indicator\{D_{1,2}f(Y)\neq 0\}\,\indicator\{D_{1,3}f(Y')\neq 0\}\,D_2f(Z)^2\,D_3f(Z')^2\bigr]\,,\\
\gamma_2 & := \sup_{(Y,Z,Z')}\mathbf{E}\big[\indicator\{D_{1,2}f(Y)\neq 0\}\,D_1f(Z)^2\,D_2f(Z')^2\big]\,,\\
\gamma_3 & := \mathbf{E}\bigl[|D_{1}f(X)\big|^4\bigr]\,,\\
\gamma_4 &:= \mathbf{E}\bigl[|D_1f(X)|^3\bigr]\,,\\
\gamma_5 &:= \sup_{A\subseteq\{1,\ldots,n\}}\mathbf{E}\bigl[|f(X)D_1f(X^A)^3|\bigr]\,,
\end{align*}
where the suprema in the definitions of $\gamma_1$ and $\gamma_2$ run over all combinations of vectors 
$(Y,Y',Z,Z')$ or $(Y,Z,Z')$ that are recombinations of $\{X,X',\tilde{X}\}$. 

\bigskip

\begin{proposition}
	\label{prop:normalbound}
	\textup{\cite[Theorem 5.1]{LachiezeRey}} Let $W:=f(X_1,\dots,X_n)$ and assume that $\E[W]=0$ and $\E[W^2]<\infty$. Moreover, let \(N\) be a standard Gaussian random variable. Then, the following bound for the normal approximation holds:
\begin{equation*}
d_K\biggl(\frac{W}{\sqrt{\Var [W]}},N\biggr)\ll \frac{\sqrt{n}}{\Var [W]}(\sqrt{n^2\gamma_1}+\sqrt{n\gamma_2}
+\sqrt{\gamma_3})+\frac{n}{(\Var [W])^{\frac{3}{2}}}\gamma_4+\frac{n}{(\Var [W])^2}\gamma_5.
\end{equation*} 
\end{proposition}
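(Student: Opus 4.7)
The plan is to apply Stein's method for the Kolmogorov distance to $W/\sigma$ (where $\sigma^2 := \Var[W]$), following the general strategy for normal approximation of functionals of i.i.d.\ random variables. For each $u \in \R$, let $f_u$ denote the bounded solution of the Stein equation $f_u'(w) - w f_u(w) = \indicator\{w \le u\} - \P(N \le u)$, for which standard estimates give $\|f_u\|_\infty$ and $\|f_u'\|_\infty$ of order $O(1)$. Taking expectations in the Stein equation with argument $W/\sigma$ gives
\begin{equation*}
d_K(W/\sigma, N) = \sup_{u \in \R}\bigl|\E[f_u'(W/\sigma) - (W/\sigma) f_u(W/\sigma)]\bigr|,
\end{equation*}
so everything reduces to bounding this Stein discrepancy uniformly in $u$.

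The first main step is to decompose $W - \E[W]$ as a sum of martingale increments built from $X$ and the independent copy $X'$. Writing $W - \E[W] = \sum_{i=1}^n \Delta_i$ with $\Delta_i := \E[f(X) - f(X^{\{i\}}) \mid X_1,\ldots,X_i]$ (where $X^{\{i\}}$ replaces $X_i$ by $X_i'$), each $\Delta_i$ is controlled by the difference operator $D_i f$. Plugging this into $\E[(W/\sigma) f_u(W/\sigma)]$ and expanding $f_u$ by Taylor's formula around the leave-one-out functional $f(X^{\{i\}})$ produces three distinct types of error: a ``main'' term whose mismatch with $\sigma^{-2}\Var[W]\cdot\E[f_u'(W/\sigma)]$ is controlled by the variance of the quadratic form $\sum_i (D_i f(X))^2$; a third-order Taylor remainder; and a boundary error arising from the non-smoothness of $f_u$ near~$u$.

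Each contribution is then bounded separately. The variance of the quadratic form expands into a double sum over $(i,j)$; swapping $X_i \leftrightarrow X_i'$ and $X_j \leftrightarrow X_j'$ using independence realises each summand as an expectation over a recombination of $\{X, X', \tilde X\}$, and such a term vanishes identically whenever the indices $i$, $j$ (or $i$, $j$, $k$ in the triple-sum expansion) do not actually interact through $f$ -- a condition captured precisely by the indicators $\indicator\{D_{1,2}f \neq 0\}$. Cauchy--Schwarz then produces the bounds $\gamma_1$ and $\gamma_2$. The Taylor remainder yields $\gamma_3$ and $\gamma_4$ by direct application of H\"older's inequality, while the boundary term is handled by a concentration argument for $W$ around $u$, leading to the term $\gamma_5$ involving the product $|f(X) D_1 f(X^A)^3|$.

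The main obstacle will be the systematic bookkeeping in the last step: after all the Taylor expansions, one is left with mixed expectations $\E\bigl[\prod_k D_{i_k} f(X^{A_k})\bigr]$ in which the subsets $A_k \subseteq \{1,\ldots,n\}$ encode which coordinates have been swapped, and each must be matched to one of the five suprema $\gamma_1,\ldots,\gamma_5$. This demands exploiting the symmetry of $f$ to relabel indices into the canonical triples $(1,2,3)$ or pairs $(1,2)$, and choosing, for each mixed expectation, the recombination giving the sharpest bound -- which is precisely why the definitions of $\gamma_1, \gamma_2, \gamma_5$ take suprema over all recombinations rather than fixing specific ones.
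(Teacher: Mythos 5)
This proposition is not proved in the paper at all; it is imported verbatim as Theorem 5.1 of the cited reference, so there is no in-paper proof for your sketch to be compared against. As a reconstruction of that external proof, your outline identifies the right high-level ingredients---Stein's equation for the Kolmogorov metric, a decomposition of $W-\E[W]$ via leave-one-out differences, a main quadratic term, a third-order Taylor remainder, and a boundary error contributing $\gamma_5$---but it stops short of being a proof: everything you defer to ``systematic bookkeeping'' is exactly where the technical content lives, namely matching each mixed expectation $\E\bigl[\prod_k D_{i_k}f(X^{A_k})\bigr]$ to the correct $\gamma_j$, showing that the indicators $\indicator\{D_{1,2}f\neq 0\}$ arise with precisely the stated structure, and verifying that the Cauchy--Schwarz and H\"older steps do not lose more than constants. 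Two concrete caveats on the plan itself: the Doob-martingale decomposition $\Delta_i=\E[f(X)-f(X^{\{i\}})\mid X_1,\dots,X_i]$ you propose is not the route taken in Chatterjee-type arguments, which rely instead on an interpolation or exchangeable-pair identity over recombinations, and converting between the two bookkeeping schemes is nontrivial; and the factor $f(X)$ appearing inside $\gamma_5$ (rather than a difference of $f$) comes from an anti-concentration estimate specific to the Kolmogorov metric, a step your sketch names but does not actually execute. As a plan it is compatible with the cited proof, but it cannot be checked as a proof in its current form, and the paper itself offers nothing to check it against.
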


\section{Lower variance bounds}
\label{Sec:lowerbound}
\subsection{Auxiliary geometric construction}\label{Sec:local-perturbations}

The following geometrical construction is taken from \cite[Section~3.1]{Richardson2008}. Let $E$ be the standard paraboloid given by
\begin{equation*}
E=\{z\in\R^d \ : \ z_d\geq z_1^2+\cdots+z_{d-1}^2\}.
\end{equation*}
We construct a simplex \(S\) in \(C^E(0,1)\) in the following way. The base is a regular simplex whose vertices \(v_1,\ldots,v_d\) lie on \(\partial E\cap H(e_d, 1/(3(d-1)^{2}))\) while \(v_0=(0,\ldots,0)\) is the apex of \(S\). Notice that the ball \(2E\cap H(e_d,1)\) has radius \(\sqrt{2}\), while the inradius of the base of the simplex is \(1/(\sqrt{3}(d-1)^2) \) and therefore, \(\{\lambda z\in\R^d : \lambda\ge 0, z\in S \}\cap H(e_d,1)\) has inradius \(3(d-1)^2/(\sqrt{3}(d-1)^2)=\sqrt{3}\). In particular, this implies that 
\begin{equation*}
\{\lambda z\in\R^d : \lambda\ge 0, z\in S \}\supseteq 2E\cap H(e_d,1).
\end{equation*}
For \(i\in\{0,1,\ldots,d\}\), let \(v_i'\) be the orthogonal projection of \(v_i\) onto \(\mathrm{span}\{e_1,\ldots,e_{d-1}\}\). Consider \(B_0\coloneqq B^{d-1}(v_0',r)\subseteq\R^{d-1}\) and \(B_i\coloneqq B^{d-1}(v_i',r')\subseteq\R^{d-1}\), $i\in\{1,\dots,d\}$, for some radii \(r\) and \(r'\) to be chosen later. Let \(b_E\) be the quadratic form associated with \(E\), i.e., \(b_E(y)=\lVert y\rVert^2\) for \(y\in\R^{d-1}\). For \(i\in\{0,\ldots,d\}\), we can define the lift \(B_i'\coloneqq\tilde b(B_i)\) on \(\partial E\) of the sets \(B_i\), where \(\tilde b\) indicates the mapping
\begin{equation*}
\tilde b\colon\R^{d-1}\to\partial E,\quad
y\mapsto (y,b_E(y)).
\end{equation*}
Note that, if \(r\) and $r'$ are small enough, then, by continuity, for any \((d+1)\text{-tuple}\) of points \(x_i\in B_i'\), the following still holds,
\begin{equation}
\label{eq:circleincone}
\{\lambda z\in\R^d : \lambda\ge 0, z\in [x_0,\ldots,x_d] \}\supseteq 2E\cap H(e_d,1).
\end{equation}
Then, we extend the aforementioned argument to arbitrary caps of \(\partial K \). For each point \(x\in\partial K\), we consider the approximating paraboloid \(Q_x\) of \(K\) at \(x\). Let \(T_x(K)\) be the tangent space of \(K\) at the point \(x\). The space \(T_x(K)\) can be identified with \(\R^{d-1}\) having \(x\) as its origin. Then, there exists a unique affine map \(A_x\) such that \(A_x(C^E(0,1))=C^{Q_x}(x,h)\) while mapping the coordinate axes onto the coordinate axes of \(T_x(K)\times\R \). We define \(D_i(x)\coloneqq A_x(B_i) \), $i\in\{0,\dots,d\}$. Then, it is true that
\(\vol_{d-1}(D_i(x))=c_1\,h^{\frac{d-1}{2}} \) for a constant $c_1>0$. We define now \(D_i'(x)\coloneqq \tilde f^{(x)}(D_i(x))\), for a neighbourhood \(U\subseteq T_x(K)\) of \(x\),
\begin{equation*}
\tilde f^{(x)} \colon U\to\partial K,\quad
y\mapsto (y,f^{(x)}(y)).
\end{equation*}
Since \(K\in\mathcal{K}_{+}^2\), there exist positive lower and upper bounds for the curvature. Thus, due to the curvature bounds of \(K\), it holds that
\begin{equation}\label{eq:hausdorff-measure}
c_K h^{\frac{d-1}{2}}\le \mathcal{H}^{d-1}(D_i'(x))\le C_K h^{\frac{d-1}{2}},
\end{equation}
where \(c_K\) and \(C_K\) are positive constants depending only on \(K\). 

\bigskip

By continuity, if every \(x_i\) belongs to a ball \(B^d(v_i,\eta)\), \eqref{eq:circleincone} is preserved whenever \(\eta>0\) is small enough.
Moreover, we can choose \(r\) and \(r'\) to be small enough such that for every \(x\in\partial K\) and every \(i\in\{0,\ldots, d\}\), \(D'_i(x)\subseteq A_x(B^d(v_i,\eta))\).
Indeed, define for \(\varepsilon>0\) and every \(i\in\{0,\ldots,d\}\), the set \(U_i=\{(x,y) \in \R^d : x \in B^{d-1}(\proj_{\R^{d-1}} v_i,\eta/2), y \in [ (1+\varepsilon)^{-1} b_E(x) , (1+\varepsilon) b_E(x) ] \} \). If \(\varepsilon\) is small enough, then \(U_i \subseteq B^d(v_i,\eta) \). Using \Cref{lemma:approx}, we can take \(h\) small enough such that \( (1+\varepsilon)^{-1} b_{Q_x}(y) \le f^{(x)}(y) \le (1+\varepsilon) b_{Q_x}(y)  \). In particular, if we choose \(r,r' < \eta/2\), then \(D_i'(x)\subseteq A_x(U_i) \subseteq A_x(B^d(v_i,\eta))  \). As a consequence, we have, for \(x_i\in D_i'(x)\),
\begin{equation}
\label{eq:Ksectionincone}
\{\lambda z\in\R^d : \lambda\ge 0, z\in [x_0,\ldots,x_d] \}\supseteq 2Q_x\cap H(u_x,h_K(u_x)-h)\supseteq K\cap H(u_x,h_K(u_x)-h),
\end{equation}
where the last inclusion holds whenever \(h\le h_0\) for \(h_0\) sufficiently small. Therefore, from now on \(r,r'\) and \(h_0\) are chosen such that the previous argument holds true.

\subsection{Proof of the lower bounds}
In this section we combine tools from  \cite{Barany2010, Reitzner2005, Richardson2008}.
Let $K\in\mathcal{K}_+^2$ and $X_1,\dots,X_n$ be independent random points that are chosen from $\partial K$ 
according to the probability distribution $\mathcal{H}^{d-1}$. Due  to
\cite[Lemma~13]{Reitzner2005}, we can choose $n$ points $y_1,\dots,y_n\in\partial K$ and corresponding disjoint caps of $K$, namely,
$C^K(y_j,h_n)$ for $j\in\{1,\dots,n\}$, with  $h_n=\Theta\bigl(n^{-\frac{2}{d-1}}\bigr)$. For all $i\in\{0,\dots,d\}$ and $j\in\{1,\dots,n\}$, we define the sets $\{D_i(y_j)\}$ and $\{D_i'(y_j)\}$ as in \Cref{Sec:local-perturbations}. Let $A_j$, $j\in\{1,\dots,n\}$, be the event that exactly one random point is contained in each $D'_i(y_j)$, $i\in\{0,\dots,d\}$, and every other point is outside of 
$C^K(y_j,h_n)\cap\partial K$. 

\begin{lemma}\textup{\cite[Section~3.2]{Richardson2008}}\label{lemma:probability-of-event}
	For \(n\) large enough, and all $j\in\{1,\dots,n\}$, there exists a constant $c\in(0,1)$ such that 
	$\P(A_j)\geq c$.
\end{lemma}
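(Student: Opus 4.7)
The plan is to compute $\P(A_j)$ via a direct multinomial calculation and then estimate each factor using the geometric tools already collected. Since the $d+1$ sets $D_0'(y_j),\ldots,D_d'(y_j)$ are pairwise disjoint and all lie inside $C^K(y_j,h_n)\cap\partial K$ (a consequence of the construction in \Cref{Sec:local-perturbations}, with $r,r'$ chosen small enough), we partition $\partial K$ into these $d+1$ regions, the remainder $(C^K(y_j,h_n)\cap\partial K)\setminus\bigcup_i D_i'(y_j)$, and the complement $\partial K\setminus C^K(y_j,h_n)$. Since the $X_1,\ldots,X_n$ are i.i.d.\ with law $\mathcal{H}^{d-1}$, the multinomial probability for the occupancy vector $(1,1,\ldots,1,0,n-d-1)$ gives
\begin{equation*}
\P(A_j) = \frac{n!}{(n-d-1)!}\,\prod_{i=0}^d p_{i,j}\,(1-P_j)^{n-d-1},
\end{equation*}
where $p_{i,j}:=\mathcal{H}^{d-1}(D_i'(y_j))$ and $P_j:=\mathcal{H}^{d-1}(C^K(y_j,h_n)\cap\partial K)$; the intermediate category is raised to the power $0$ and disappears.

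Next I would estimate the three factors. Applying the bound \eqref{eq:hausdorff-measure} with $h=h_n=\Theta(n^{-2/(d-1)})$ yields $p_{i,j}=\Theta(h_n^{(d-1)/2})=\Theta(n^{-1})$, uniformly in $i\in\{0,\ldots,d\}$ and $j$. For $P_j$, the standard fact that a height-$h_n$ cap of $K$ has $d$-dimensional volume $\Theta(h_n^{(d+1)/2})=\Theta(n^{-(d+1)/(d-1)})$, combined with \Cref{relation-of-caps}, gives $P_j=\Theta(n^{-1})$, again uniformly in $j$. Finally, the combinatorial prefactor satisfies $n!/(n-d-1)!=n(n-1)\cdots(n-d)=\Theta(n^{d+1})$.

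Combining these estimates, the factors $\Theta(n^{d+1})$ and $\prod_{i=0}^d p_{i,j}=\Theta(n^{-(d+1)})$ cancel to leave a positive constant, while $(1-P_j)^{n-d-1}=(1-\Theta(n^{-1}))^{n-d-1}$ is bounded below by a positive constant as $n\to\infty$. Therefore $\P(A_j)\gg 1$; since trivially $\P(A_j)\le 1$, there exists $c\in(0,1)$ with $\P(A_j)\ge c$ for all $j\in\{1,\ldots,n\}$ and all $n$ sufficiently large, which is the claim. The one subtle point is that the hidden constants in \eqref{eq:hausdorff-measure} and \Cref{relation-of-caps} must be chosen uniformly across the $n$ different caps $C^K(y_j,h_n)$; this uniformity is built into those statements, since the constants depend only on $K$ (through global bounds on its principal curvatures coming from $K\in\mathcal{K}_+^2$) and not on the particular boundary point.
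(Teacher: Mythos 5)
Your proof is correct and follows essentially the same route as the paper: write $\P(A_j)$ as a multinomial-type product, estimate $\mathcal{H}^{d-1}(D_i'(y_j))=\Theta(n^{-1})$ via \eqref{eq:hausdorff-measure}, estimate $\mathcal{H}^{d-1}(C^K(y_j,h_n)\cap\partial K)=\Theta(n^{-1})$ via the cap height scaling and \Cref{relation-of-caps}, and observe that the combinatorial prefactor cancels the small-set probabilities while $(1-\Theta(n^{-1}))^{n-d-1}$ stays bounded below. (Your prefactor $n!/(n-d-1)!$ is in fact the precise multinomial count; the paper writes $\binom{n}{d+1}$, off by $(d+1)!$, but both are $\Theta(n^{d+1})$ and the asymptotics are unaffected.)
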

\begin{proof}
	The probability of the event $A_j$ is
	\begin{align*}
	\P(A_j)&=\binom{n}{d+1}\P(X_i\in D'_i(y_j), i\in\{0,\dots,d\})
	\P(X_i\notin C^K(y_j,h_n)\cap\partial K, i\in\{d+1,\dots,n\}) \\
	&=\binom{n}{d+1}\prod_{i=0}^d \mathcal{H}^{d-1}(D'_i(y_j)) \prod_{k=d+1}^n
	(1-\mathcal{H}^{d-1}(C^K(y_j,h_n)\cap\partial K)).
	\end{align*}
	Combining \Cref{relation-of-caps}, \cite[Lemma~13]{Reitzner2005}
	and \Cref{eq:hausdorff-measure}, we obtain
	\begin{equation*}
	\P(A_j)\geq c_2n^{d+1}n^{-d-1}(1-c_3n^{-1})^{n-d-1}\geq c>0,
	\end{equation*}
	where all constants are positive. 
\end{proof}

Let $\mathcal{F}$ be the $\sigma$-field generated by the positions of all $X_1,\dots,X_n$ 
except those which are contained in $D'_0(y_j)$ with $\indicator_{A_j}=1$. 
Assume that $\indicator_{A_j}=\indicator_{A_k}=1$ for some 
$j,k\in\{1,\dots,n\}$ and without loss of generality that $X_j$ and $X_k$ are 
the points in $D'_0(y_j)$ and $D'_0(y_k)$. By \Cref{eq:Ksectionincone}, it is not possible 
that there is an edge between $X_j$ and $X_k$. Therefore, the change of the 
intrinsic volume affected by moving $X_j$ within $D'_0(y_j)$ is independent 
of the change of the intrinsic volume of moving $X_k$ within  $D'_0(y_k)$. 
As a consequence, we obtain
\begin{equation*}
\Var[V_{\ell}(K_n)|\mathcal{F}]=\sum_{j=1}^n\Var_{j}[V_{\ell}(K_n)]
\indicator_{A_j},
\end{equation*}
where the variances \(\Var_{j}[\cdot]\) are taken over $X_j\in D'_0(y_j)$. 

\bigskip

For $j\in\{1,\dots,n\}$ and $i\in\{0,\dots,d\}$, let $z_j^{i}$ be an arbitrary 
point in $D'_i(y_j)$. We indicate with $N_j$ the normal cone of the simplex 
$[z_j^0,\dots,z_j^d]$ at vertex $z_j^0$. Let $S_j$ be the cone with base $H(u_{z^0_j},h_K(u_{z_j^0})-h_n)\cap 2Q_x$ 
and vertex $z_j^0$. Note that $u_{z_j^0}$ is the unique unit outer normal of $K$ at $z_j^0$. The corresponding 
normal cone of $S_j$ at $z_j^0$ is denoted by $\bar{N}_j$. Moreover, the angular aperture of \(S_j\) at \(z_j^0\) is at most $ c'_K\sqrt{h_n}$, where \(c'_K>0\) is a constant that depends on $K$. Because of this and \Cref{eq:Ksectionincone},  we can find sets $\Sigma_j$ such that
\begin{equation}\label{eq:normal-cone}
  \SS^{d-1}\cap N_j\subset\SS^{d-1}\cap \bar{N}_j\subset\SS^{d-1}\cap(u_{z_j^0}+
  c'_K\sqrt{h_n}B^d)=:\Sigma_j.
\end{equation}
We fix $j\in\{1,\dots,n\}$ and $z_j^i\in D'_i(y_j)$ for all $i\in\{1,\dots,d\}$. Let 
$F_j:=[z_j^1,\dots, z_j^d]$ and define
\begin{equation*}
\widetilde{V}_{\ell}(z;F_j):=\binom{d}{\ell}\frac{\kappa_d}{\kappa_{\ell}\kappa_{d-\ell}}
\int_{G(d,\ell)}\indicator_{\{L\cap\Sigma_j\neq\emptyset\}}
\vol_{\ell}\bigl([z,F_j] |L\bigr)\nu_{\ell}(\dint L), \quad z\in D'_0(y_j), \ell\in\{1,\dots,d\}.
\end{equation*}

\begin{lemma}\label{lemma:lower-bound}
	Let $j\in\{1,\dots,n\}$ and let $X_j$ be a point chosen 
	with respect to the normalized Hausdorff measure restricted to $D'_0(y_j)$. Then,
	\begin{equation*}
	\Var_j[\widetilde{V}_{\ell}(X_j;F_j)]=\Theta\bigl(n^{-2\frac{d+1}{d-1}}\bigr), \quad \ell\in\{1,\dots,d\}.
	\end{equation*}
\end{lemma}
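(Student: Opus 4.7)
The plan is to represent $\widetilde{V}_{\ell}(\,\cdot\,; F_j)$ as an affine function on the small surface patch $D_0'(y_j)$ and then to reduce the variance computation to the variance of the local height function of $\partial K$ at $y_j$.

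First I would fix any $L \in G(d,\ell)$ with $L \cap \Sigma_j \neq \emptyset$, choose a unit vector $u_L \in L \cap \Sigma_j$, and set $M_L := L \cap u_L^{\perp}$. The construction in \Cref{Sec:local-perturbations} (specifically \Cref{eq:Ksectionincone}) guarantees that every $z \in D_0'(y_j)$ is separated from the vertices of $F_j$ in the $u_L$-direction by an amount of order $h_n$, uniformly in such $L$. Thus the combinatorial type of $\conv(z|L, F_j|L)$ does not change as $z$ varies in $D_0'(y_j)$, so the map $z\mapsto\vol_\ell([z,F_j]|L)$ is affine on $D_0'(y_j)$, and the cone-volume formula yields $\vol_\ell([z, F_j]|L) = \beta_L + \tfrac{1}{\ell}\vol_{\ell-1}(F_j|M_L)\langle z - y_j, u_L\rangle$ for some $\beta_L\in\R$ independent of $z$. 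Integrating over $L$,
\begin{equation*}
\widetilde{V}_{\ell}(z; F_j) = \alpha + \langle V, z - y_j\rangle,\qquad V := \binom{d}{\ell}\frac{\kappa_d}{\ell\,\kappa_\ell\kappa_{d-\ell}}\int \indicator\{L \cap \Sigma_j \neq \emptyset\}\vol_{\ell-1}(F_j|M_L)\,u_L\,\nu_\ell(\dint L).
\end{equation*}

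Next I would estimate the normal and tangential components of $V$ at $y_j$. By \Cref{eq:normal-cone}, every $u_L \in \Sigma_j$ satisfies $\sphericalangle(u_L, u_{y_j}) = O(\sqrt{h_n})$, hence $\langle u_L, u_{y_j}\rangle = 1 - O(h_n)$ while the tangential part of $u_L$ has norm $O(\sqrt{h_n})$. Since $F_j$ has diameter $\Theta(\sqrt{h_n})$, one has $\vol_{\ell-1}(F_j|M_L) = \Theta(h_n^{(\ell-1)/2})$ on a $\nu_\ell$-positive fraction of the integration domain; combined with \Cref{lemma:measure} yielding $\nu_\ell(\{L : L \cap \Sigma_j \neq \emptyset\}) = \Theta(h_n^{(d-\ell)/2})$, this gives
\begin{equation*}
V^\perp := \langle V, u_{y_j}\rangle = \Theta(h_n^{(d-1)/2}),\qquad |V^\parallel| := |V - V^\perp u_{y_j}| = O(h_n^{d/2}).
\end{equation*}

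Finally I would compute the variance. Parametrising $X_j - y_j = x - f^{(y_j)}(x)\,u_{y_j}$ with $x \in D_0(y_j) \subset T_{y_j}(\partial K)$, one obtains $\langle V, X_j - y_j\rangle = \langle V^\parallel, x\rangle - V^\perp f^{(y_j)}(x)$. The pullback of $\mathcal{H}^{d-1}$ to the centred ellipsoid $D_0(y_j) = A_{y_j}(B_0)$ has density $\sqrt{1 + |\nabla f^{(y_j)}(x)|^2}$, and by \Cref{lemma:approx} both this density and $f^{(y_j)}$ are even in $x$ up to an arbitrarily small multiplicative error. The cross covariance between $f^{(y_j)}(x)$ and $\langle V^\parallel, x\rangle$ therefore vanishes to leading order, and
\begin{equation*}
\Var_j[\widetilde{V}_{\ell}(X_j;F_j)] = (V^\perp)^2\,\Var_j[f^{(y_j)}(x)] + \Var_j[\langle V^\parallel, x\rangle] + o(h_n^{d+1}).
\end{equation*}
Since $f^{(y_j)}$ is (up to the factor $1 \pm \delta$ from \Cref{lemma:approx}) a positive-definite quadratic form on an ellipsoid of linear size $\Theta(\sqrt{h_n})$, $\Var_j[f^{(y_j)}(x)] = \Theta(h_n^2)$, whence the first summand equals $\Theta(h_n^{d-1})\cdot\Theta(h_n^2) = \Theta(h_n^{d+1})$; this drives the lower bound. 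The second summand is bounded by $|V^\parallel|^2\,\E_j[|x|^2] = O(h_n^d)\cdot O(h_n) = O(h_n^{d+1})$, which yields the matching upper bound. Substituting $h_n = \Theta(n^{-2/(d-1)})$ concludes the proof. The main obstacle will be the uniform combinatorial stability in step one: the $\Theta(h_n)$-separation supplied by \Cref{Sec:local-perturbations} must be invoked uniformly over $L$ with $L \cap \Sigma_j \neq \emptyset$, after which the remaining estimates follow routinely from \Cref{lemma:approx} and \Cref{lemma:measure}.
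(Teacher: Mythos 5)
Your proposal reaches the correct conclusion but by a genuinely different route from the paper. The paper argues directly and softly: it shows $\vol_\ell([X_j,F_j]|L)=\Theta(h_n^{(\ell+1)/2})$ for relevant $L$, integrates to get $\widetilde V_\ell(X_j;F_j)=\Theta(h_n^{(d+1)/2})$, and then estimates the variance via the identity $\Var_j=\tfrac12\E[|\widetilde V_\ell(X_j^1;F_j)-\widetilde V_\ell(X_j^2;F_j)|^2]$, noting (somewhat tersely) that the projected heights of two independent copies differ by $\Theta(h_n)$, so the difference is again $\Theta(h_n^{(d+1)/2})$. You instead make the affine dependence on $z$ explicit, decompose the gradient $V$ into normal and tangential parts, and push the computation through the local height parametrization of $\partial K$; this is more structural and arguably makes the positive-probability-of-separation step of the paper unnecessary, at the cost of substantially more geometric bookkeeping. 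One point of imprecision in your write-up: the displayed ``cone-volume formula'' $\vol_\ell([z,F_j]|L)=\beta_L+\tfrac1\ell\vol_{\ell-1}(F_j|M_L)\langle z-y_j,u_L\rangle$ is not an identity. The gradient of $z\mapsto\vol_\ell(\conv(z|L,F_j|L))$ is $\tfrac1\ell\sum_{F\ \mathrm{visible}}\vol_{\ell-1}(F)\,n_F$, a vector in $L$ which is not parallel to $u_L$ in general; its $M_L$-components are nonzero, of order $O(h_n^{\ell/2})$, comparable (not subordinate) to the contribution of the tangential part of $u_L$. Your final estimate $|V^\parallel|=O(h_n^{d/2})$ is still correct once these $M_L$-components are accounted for, because the visible facets of $F_j|L$ seen from $z|L$ are the nearly-horizontal ones whose normals have $M_L$-components of size $O(\sqrt{h_n})$, but this needs to be said; as written the affine formula silently drops them. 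With that correction the rest (evenness killing the cross-covariance, $\Var_j[f^{(y_j)}(x)]=\Theta(h_n^2)$, summing the two nonnegative contributions) goes through, and both approaches give $\Theta(h_n^{d+1})=\Theta(n^{-2(d+1)/(d-1)})$.
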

\begin{proof}
	Note that 
	$[X_j,F_j] |L$ is a simplex in $L\in G(d,\ell)$ with base $F_j|L$ and additional point $X_j|L$.
	As a consequence, the height of $[X_j,F_j] |L$ is proportional to $h_n$ and
	\begin{equation*}
	\vol_{\ell-1}(F_j | L)=\Theta\bigl(h_n^{\frac{\ell-1}{2}}\bigr),
	\end{equation*}
	where $L\in G(d,\ell)$ with $L\cap\Sigma_j\neq\emptyset$. Thus,
	\begin{equation*}
	\vol_{\ell}\bigl([X_j,F_j] |L\bigr)=\Theta\bigl(h_n^{\frac{\ell+1}{2}}\bigr).
	\end{equation*}
	Due to Lemma \ref{lemma:measure} and \Cref{eq:normal-cone}, it follows
	\begin{equation*}
	\int_{G(d,\ell)}\indicator_{\{L\cap\Sigma_j\neq\emptyset\}}
	\,\nu_{\ell}(\dint L)=\nu_{\ell}(\{L\in G(d,\ell):L\cap\Sigma_j\neq\emptyset\})
	=\Theta\bigl(h_n^{\frac{d-\ell}{2}}\bigr).
	\end{equation*}
	Therefore, we obtain
	\begin{equation*}
	\widetilde{V}_{\ell}(X_j;F_j)=\Theta\bigl(h_n^{\frac{d+1}{2}}\bigr).
	\end{equation*}
	Let $X_j^1$ and $X_j^2$ be independent copies of $X_j$, then
	\begin{equation*}
	|\widetilde{V}_{\ell}(X_j^1;F_j)-\widetilde{V}_{\ell}(X_j^2;F_j)|=\Theta\bigl(h_n^{\frac{d+1}{2}}\bigr),
	\end{equation*}
	since the heights of $X_j^1|L$ and $X_j^2|L$ are 
	different with probability 1.
	Using $h_n=\Theta\bigl(n^{-\frac{2}{d-1}}\bigr)$, we obtain
	\begin{equation*}
	\begin{split}
	\Var_j\bigl[\widetilde{V}_{\ell}(X_j;F_j)\bigr]
	&=\frac{1}{2}\E\Bigl[\bigl|\widetilde{V}_{\ell}(X_j^1;F_j)-\widetilde{V}_{\ell}(X_j^2;F_j)\bigr|^2\Bigr] \\
	&=\Theta\bigl(n^{-2\frac{d+1}{d-1}}\bigr). \qedhere
	\end{split}
	\end{equation*}
\end{proof}

We can now proceed with the proof of the lower variance bounds.
\begin{proof}[Proof of the lower bounds of \Cref{thm:variancebounds}]
	Let $\mathcal{F}$ be the $\sigma$-field defined as above. The conditional variance 
	formula implies that
	\begin{equation*}
	\Var[V_{\ell}(K_n)]=\E[\Var[V_{\ell}(K_n)|\mathcal{F}]]+\Var
	[\E[V_{\ell}(K_n)|\mathcal{F}]]\geq \E[\Var[V_{\ell}(K_n)|\mathcal{F}]].
	\end{equation*}
	As already mentioned, $\mathcal{F}$ induces an independence property. Therefore, we 
	obtain
	\begin{equation*}
	\Var[V_{\ell}(K_n)|\mathcal{F}]=\sum_{j=1}^n\Var_{j}[V_{\ell}(K_n)]\indicator_{A_j}
	=\sum_{j=1}^n\Var_j[\widetilde{V}_{\ell}(X_j;F_j)]\indicator_{A_j}.
	\end{equation*}
	Finally, applying \Cref{lemma:probability-of-event}, \Cref{lemma:lower-bound} and 
	taking expectation yields
	\begin{equation*}
	\Var[V_{\ell}(K_n)]\gg n^{-2\frac{d+1}{d-1}}\sum_{j=1}^n\P(A_j)
	\gg n^{-2\frac{d+1}{d-1}} n=n^{-\frac{d+3}{d-1}}.\qedhere
	\end{equation*}
\end{proof}

\section{Upper variance bounds}
\label{Sec:upperbounds}
In the following, we find upper bounds for $\Var[V_{\ell}(K_n)]$, $\ell\in\{1,\dots,d\}$. 
The proof is based on the Efron-Stein jackknife inequality and follows the ideas of \cite{Barany2010}. In
contrast to \cite{Barany2010}, we use the concept of surface body, in particular, \Cref{lemma:surface-body} 
about the fact that the surface body is contained in the random polytope $K_n$ with high 
probability. Moreover, we make use of \Cref{relation-of-caps} for our estimates.

\begin{proof}[Proof of the upper bounds of \Cref{thm:variancebounds}]
	First, let $K=B^d$. We indicate with $T_n$ the event that the surface body $K(s\geq \tau_n)$ is 
	contained in $K_n$. Let \( \ell\in\{1,\ldots,d\}\). Applying the Efron-Stein jackknife inequality yields
	\begin{equation}\label{eq:splitting}
	\begin{split}
	\Var[V_{\ell}(K_n)]&\ll n\E\bigl[(V_{\ell}(K_{n+1})-V_{\ell}(K_n))^2\bigr] \\ 
	&=n\E\bigl[(V_{\ell}(K_{n+1})-V_{\ell}(K_n))^2\indicator_{T_n}\bigr]+
	n\E\bigl[(V_{\ell}(K_{n+1})-V_{\ell}(K_n))^2\indicator_{T_n^c}\bigr].
	\end{split}
	\end{equation}
	 It is obvious that $(V_{\ell}(K_{n+1})-V_{\ell}(K_n))^2\leq V_{\ell}(K)^2$ and 
	$\E[\indicator_{T_n^c}]=\P(T_n^c)$. Since the parameter $\alpha$ can be chosen arbitrarily 
	big in \Cref{lemma:surface-body}, the second term in \Cref{eq:splitting} is negligible in the asymptotic analysis.
	By \Cref{eq.Kubota}, we obtain
	\begin{equation}\label{estimate-Kubota}
	\begin{split}
	\Var[V_{\ell}(K_n)]
	&\ll n\E\bigl[(V_{\ell}(K_{n+1})-V_{\ell}(K_n))^2\indicator_{T_n}\bigr]\\
	&\ll n\E\biggl[\int_{G(d,\ell)}\vol_{\ell}((K_{n+1}|A)\setminus (K_n |A))\nu_{\ell}(\dint A) \\
	&\qquad\qquad\times\int_{G(d,\ell)}\vol_{\ell}((K_{n+1}|B)\setminus (K_n |B))\,\nu_{\ell}(\dint B) \indicator_{T_n}\biggr] \\
	&\ll n\E\biggl[\int_{G(d,\ell)}\int_{G(d,\ell)}\vol_{\ell}((K_{n+1}|A)\setminus (K_n |A))\,\vol_{\ell}((K_{n+1}|B)\setminus (K_n |B))\\
	&\qquad\qquad\times\indicator_{T_n}\nu_{\ell}(\dint A)\nu_{\ell}(\dint B)\biggr].
	\end{split}
	\end{equation}
	If $X_{n+1}|A\in K_n|A$, then the set $(K_{n+1}|A)\setminus (K_n|A)$ is clearly empty. Otherwise, 
	$(K_{n+1}|A)\setminus (K_n|A)$ consists of several disjoint simplices which are the convex hull of 
	$X_{n+1}|A$ and those facets of $K_n|A$ that can be ``seen'' from $X_{n+1}|A$. For
	$I=\{i_1,\dots,i_{\ell}\}\subset\{1,\dots,n\}$, we indicate with $F_I$ the convex hull of 
	$X_{i_1},\dots,X_{i_{\ell}}$. Note that $F_I$ and $F_I|A$ are $(\ell-1)$-dimensional simplices 
	with probability $1$. The closed half space in $\R^d$ which is determined by the hyperplane 
	$A^{\bot}+\aff F_I$ and contains the origin is denoted by $H_0(F_I,A)$. 
	The other half space is $H_+(F_I,A)$. The corresponding $\ell$-dimensional half spaces in $A$ 
	are denoted by $H_0(F_I|A)$ and $H_+(F_I|A)$. 
	Let $\mathcal{F}(A)$ be the set of $(\ell-1)$-dimensional facets of $K_n|A$ that can be seen 
	from $X_{n+1}|A$. It is defined by
	\begin{equation*}
	\mathcal{F}(A)=\{F_I|A : K_n|A\subset H_0(F_I|A),\, X_{n+1}|A\in H_{+}(F_I|A), 
	I=\{i_1,\dots,i_{\ell}\}\subset \{1,\dots,n\}\}.
	\end{equation*}
	Then,
	\begin{equation}
	\begin{split}
	\label{estimate-Kubota2}
	\eqref{estimate-Kubota}
	&\ll n\int_{\SS^{d-1}}\dots\int_{\SS^{d-1}}\int_{G(d,\ell)}\int_{G(d,\ell)}
	\biggl(\sum_{F\in\mathcal{F}(A)}\vol_{\ell}([x_{n+1}|A,F])\biggr) \\
	&\qquad\times
	\biggl(\sum_{F^{'}\in\mathcal{F}(B)}\vol_{\ell}([x_{n+1}|B,F^{'}])\indicator_{T_n}\biggr)
	\nu_{\ell}(\dint A)\nu_{\ell}(\dint B)\mathcal{H}^{d-1}(\dint x_1)\cdots\mathcal{H}^{d-1}(\dint x_{n+1}).
	\end{split}
	\end{equation}
	Next, the integration is extended over all possible index sets $I,J$ and the order of integration is changed. 
	As a consequence, we obtain
	\begin{equation*}
	\begin{split}
	\eqref{estimate-Kubota2}&\ll n\int_{G(d,\ell)}\int_{G(d,\ell)}\int_{(\SS^{d-1})^{n+1}}
	\left(\sum_{I}\indicator\{F_I|A\in\mathcal{F}(A)\}\,\vol_{\ell}([F_I,x_{n+1}]|A)\right) \\
	&\qquad\qquad\times \left(\sum_{J}\indicator\{F_J|B\in\mathcal{F}(B)\}\,\vol_{\ell}([F_J,x_{n+1}]|B)
	\indicator_{T_n}\right) \\
	&\qquad\qquad\times \mathcal{H}^{d-1}(\dint x_1)\cdots\mathcal{H}^{d-1}(\dint x_{n+1})\nu_{\ell}(\dint A)\nu_{\ell}(\dint B).
	\end{split}
	\end{equation*}
	Note that $[F_I,X_{n+1}]|A$ and $[F_J,X_{n+1}]|B$ are contained in the associated caps 
	$C_{\ell}(I,A):=H_+(F_I,A)\cap B^d$ and $C_{\ell}(J,B)$. Moreover, we use the abbreviation
	\(	C_d(I,A)=(H_+(F_I|A)+A^{\bot})\cap B^d\).
	We indicate with $V_{\ell}(I,A)=\vol_{\ell}(C_{\ell}(I,A))$ and 
	$V_d(I,A)=V_d(C_d(I,A))$ the volumes of these caps.
	Therefore, the variance is bounded by
	\begin{align*}
	\Var[V_{\ell}(K_n)]&\ll n\sum_I \sum_J\int_{G(d,\ell)}\int_{G(d,\ell)} \int_{(\SS^{d-1})^{n+1}}
	\!\!\!\!\indicator\{F_I|A\in\mathcal{F}(A)\}V_{\ell}(I,A)\indicator\{F_J|B\in\mathcal{F}(B)\} \\
	&\qquad\times V_{\ell}(J,B)\,\indicator_{T_n}\,
	\mathcal{H}^{d-1}(\dint x_1)\cdots\mathcal{H}^{d-1}(\dint x_{n+1})\,\nu_{\ell}(\dint A)\,\nu_{\ell}(\dint B),
	\end{align*}
	where the summation extends over all $\ell$-tuples $I$ and $J$. Of course, these tuples may have
	a non-empty intersection. However, if the size of $I\cap J$ is fixed to be $k$, then the 
	corresponding terms in the sum are independent of the choice of $i_1,\dots,i_{\ell}$ and 
	$j_1,\dots,j_{\ell}$. For any $k\in\{0,1,\dots,\ell\}$, we indicate with $F$ the convex hull of 
	$X_1,\dots,X_{\ell}$ and by $G$ the convex hull of $X_{\ell-k+1},\dots,X_{2\ell-k}$. As
	in \cite{Barany2010}, we obtain
	\begin{equation}
	\label{eq:sigmak}
	\begin{split}
	\Var[V_{\ell}(K_n)]&\ll n\sum_{k=0}^{\ell} \binom{n}{\ell}\binom{\ell}{k}\binom{n-\ell}{\ell-k}
	\int_{G(d,\ell)}\int_{G(d,\ell)} \int_{(\SS^{d-1})^{n+1}}
		\!\!\!\!\indicator\{F_I|A\in\mathcal{F}(A)\}V_{\ell}(I,A) \\
	&\qquad\times \indicator\{F_J|B\in\mathcal{F}(B)\}V_{\ell}(J,B)\,\indicator_{T_n}\,
	\mathcal{H}^{d-1}(\dint x_1)\cdots\mathcal{H}^{d-1}(\dint x_{n+1})\,\nu_{\ell}(\dint A)\,\nu_{\ell}(\dint B).
	\end{split}
	\end{equation}
	We indicate with $\Sigma_k$ the $k\text{-th}$ term in the previous sum. By symmetry, we can restrict the summation to those tuples where $V_d(I,A)\geq V_d(J,B)$. In 
	addition to that, we multiply the integrand by $\indicator\{C_d(I,A)\cap C_d(J,B)\neq\emptyset\}$. 
	This is indeed possible because the caps have at least the point $X_{n+1}$ in common. 
	It follows immediately that
	\begin{align*}
	\Sigma_k &\ll n^{2\ell-k+1}\int_{G(d,\ell)}\int_{G(d,\ell)}
	\int_{(\SS^{d-1})^{n+1}}\!\!\!\!\indicator\{F|A\in\mathcal{F}(A)\}\,V_{\ell}(I,A)\, \indicator\{C_d(I,A)\cap C_d(J,B)\neq\emptyset\} \\
	&\qquad\qquad\times\indicator\{G|B\in\mathcal{F}(B)\} \,V_{\ell}(J,B)\,
	\indicator\{V_d(I,A)\geq V_d(J,B)\}\\
	&\qquad\qquad\times \indicator_{T_n}\,
	\mathcal{H}^{d-1}(\dint x_1)\cdots\mathcal{H}^{d-1}(\dint x_{n+1})\,\nu_{\ell}(\dint A)\,\nu_{\ell}(\dint B).
	\end{align*}
	Next, we integrate with respect to $x_{2\ell-k+1},\dots,x_{n},x_{n+1}$. Due to the 
	condition $F|A\in\mathcal{F}(A)$, the points $X_{2\ell-k+1},\dots,X_n$ are contained in 
	$H_0(F,A)$ and $X_{n+1}$ is in $H_+(F,A)$. Therefore,
	\begin{align*}
	\Sigma_k &\ll n^{2\ell-k+1}\int_{G(d,\ell)}\int_{G(d,\ell)}\int_{(\SS^{d-1})^{2\ell-k}}
	\!\!(1-\mathcal{H}^{d-1}(C_d(I,A)\cap\SS^{d-1}))^{n-2\ell+k} \\
	&\qquad\times \mathcal{H}^{d-1}(C_d(I,A)\cap\SS^{d-1})\,V_{\ell}(I,A)\,
	\indicator\{C_d(I,A)\cap C_d(J,B)\neq\emptyset\,\}V_{\ell}(J,B) \\
	&\qquad\times \indicator\{V_d(I,A)\geq V_d(J,B)\}\,\indicator_{T_n}
	\,\mathcal{H}^{d-1}(\dint x_1)\cdots\mathcal{H}^{d-1}(\dint x_{2\ell-k})\,\nu_{\ell}(\dint A)\,\nu_{\ell}(\dint B).
	\end{align*}
	The assumption $V_d(I,A)\geq V_d(J,B)$ implies that the height of the cap $C_d(I,A)$ 
	is at least the height of $C_d(J,B)$.  Due to $C_d(I,A)\cap C_d(J,B)\neq\emptyset$, 
	we find a constant $\beta$ such that $C_d(J,B)$ is contained in $\beta\,C_d(I,A)$. 
	More precisely, $\beta\,C_d(I,A)$ is an enlarged homothetic copy of $C_d(I,A)$, where the 
	center of homothety $z\in\SS^{d-1}$ coincides with the center of the cap $C_d(I,A)$.
	It follows from the homogeneity that the Hausdorff measure (restricted to $\beta\,\SS^{d-1}$) 
	of $\beta\,C_d(I,A)$ is up to a constant $\mathcal{H}^{d-1}(C_d(I,A)\cap\SS^{d-1})$.
	Therefore,
	\begin{align*}
	\int_{(\SS^{d-1})^{\ell-k}}\!\!
	&\indicator\{C_d(I,A)\cap C_d(J,B)\neq\emptyset\}\,\indicator\{V_d(I,A)\geq V_d(J,B)\}\\
	&\times V_{\ell}(J,B)\,\mathcal{H}^{d-1}(\dint x_{\ell+1})\cdots\mathcal{H}^{d-1}(\dint x_{2\ell-k})
	\ll\mathcal{H}^{d-1}(C_d(I,A)\cap\SS^{d-1})^{\ell-k}V_{\ell}(I,A).
	\end{align*}
	As in \cite{Barany2010}, the conditions $C_d(I,A)\cap C_d(J,B)\neq\emptyset$ 
	and $V_d(I,A)\geq V_d(J,B)$ are only satisfied if the angle between $z$ and the subspace $B$ 
	is not larger than twice the central angle \(\delta\) of the cap $C_d(I,A)$. Moreover, \(\delta\) is bounded by 
	\begin{equation}
	\label{eq:boundbeta}
	\delta\ll V_d(I,A)^{1/(d+1)}.
	\end{equation}
	Thus,
	\begin{align*}
	\Sigma_k &\ll n^{2\ell-k+1}\int_{G(d,\ell)}\int_{G(d,\ell)}\int_{(\SS^{d-1})^{\ell}}
	\!\!(1-\mathcal{H}^{d-1}(C_d(I,A)\cap\SS^{d-1}))^{n-2\ell+k}  \\
	&\qquad\times \mathcal{H}^{d-1}(C_d(I,A)\cap\SS^{d-1})^{\ell-k+1}  V_{\ell}(I,A)^2 \,\indicator\{\sphericalangle (z,B)\ll V_d(I,A)^{1/(d+1)}\}\\
	&\qquad\times \indicator_{T_n}\,\mathcal{H}^{d-1}(\dint x_1)\cdots\mathcal{H}^{d-1}(\dint x_{\ell})\,\nu_{\ell}(\dint A)\,\nu_{\ell}(\dint B).
	\end{align*}
	Due to Lemma \ref{relation-of-caps}, the condition $T_n$ can be replaced by the condition 
	\begin{displaymath}
	V_d(I,A)\leq c_1\, (\log{n}/n)^{(d+1)/(d-1)}
	\end{displaymath} 
	for some constant $c_1>0$. In the following, the economic cap covering theorem is used, 
	recall \Cref{economic-cap-theorem}. Let $h$ be a positive integer such that $2^{-h}\leq \log{n}/n$. 
	Note that the smallest possible value of $h$ is $h_0=-\lfloor \log_2 (\log{n}/n) \rfloor$.
	According to the economic cap covering theorem, we find for each $h$ a collection of 
	caps $\{C_1,\dots,C_{m(h)}\}$ which cover the wet part of $B^d|A$ with parameter 
	$(2^{-h})^{(\ell+1)/(d-1)}$. This collection of caps is denoted by $\mathcal{M}_h$. 
	Each cap $C_i$ can be viewed as a projection of a $d$-dimensional cap $C_i(A)$ from 
	$B^d$ to $A$. Now we consider an arbitrary tuple $(X_1,\dots,X_{\ell})$ which has a corresponding 
	cap $C_d(I,A)$ having volume at most $c_1\,(\log{n}/n)^{(d+1)/(d-1)}$. We relate to 
	$(X_1,\dots,X_{\ell})$ the maximal $h$ such that $C_{\ell}(I,A)\subset C_i$ for some 
	$C_i\in\mathcal{M}_h$. This is indeed possible since at least $2^{-h_0}$ is roughly
	$\log{n}/n$ and the volume of the caps in $\mathcal M_h$ tends to zero as 
	$h\to\infty$. As a consequence, 
	we obtain
	\begin{equation*}
	V_{\ell}(I,A)\leq \vol_{\ell}(C_i)\ll 2^{-h(\ell+1)/(d-1)}
	\end{equation*}
	and 
	\begin{equation*}
	V_d(I,A)\leq V_d(C_i(A))\ll2^{-h(d+1)/(d-1)}.
	\end{equation*}
	
	According to \Cref{relation-of-caps}, \(\mathcal{H}^{d-1}(C_d(I,A)\cap\SS^{d-1})\leq\mathcal{H}^{d-1}(C_i(A)\cap\SS^{d-1})\ll 2^{-h}\).
	Due to the maximality of $h$, it holds \(
	V_d(I,A)\geq 2^{-(h+1)(d+1)/(d-1)}\). In addition to that, it follows from \Cref{relation-of-caps} that
	\(\mathcal{H}^{d-1}(C_d(I,A)\cap\SS^{d-1})\geq c_2 2^{-(h+1)}\),
	for some constant $c_2>0$.
	Therefore, we obtain
	\begin{align*}
	&(1-\mathcal{H}^{d-1}(C_d(I,A)\cap\SS^{d-1}))^{n-2\ell+k}\mathcal{H}^{d-1} (C_d(I,A)\cap\SS^{d-1})^{\ell-k+1} V_{\ell}(I,A)^2 \\
	&\qquad\qquad\ll (1-c_2 2^{-(h+1)})^{n-2\ell+k}2^{-h(\ell-k+1)}2^{-2h(\ell+1)/(d-1)}.
	\end{align*}
	Then, we integrate each $(X_1,\dots,X_{\ell})$ on $(C_i(A))^{\ell}$ and 
	we use the fact $1-x\leq \exp(-x)$ to obtain 
	\begin{align*}
	&\exp(-c_2(n-2\ell+k)2^{-h-1})2^{-h(\ell-k+1)}2^{-2h(\ell+1)/(d-1)}\mathcal{H}^{d-1}(C_i(A)\cap\SS^{d-1})^{\ell} \\
	&\qquad\qquad\ll\exp(-c_2(n-2\ell+k)2^{-h-1})2^{-h(\ell-k+1)}2^{-2h(\ell+1)/(d-1)}2^{-h\ell}.
	\end{align*}
	
	Since the volume of the wet part of $B^{\ell}$ with parameter $2^{-h(\ell+1)/(d-1)}$ is 
	$\Theta\bigl(2^{-2h/(d-1)}\bigr)$ (note that $h\to\infty$, as $ n\to\infty$), we obtain 
	\begin{equation}\label{number-caps}
	|\mathcal{M}_h|\ll \frac{2^{-2h/(d-1)}}{2^{-h(\ell+1)/(d-1)}}=2^{h(\ell-1)/(d-1)}.
	\end{equation}
	Finally, this results in
	\begin{align*}
	\int_{G(d,\ell)}&\int_{(\SS^{d-1})^{\ell}}
	(1-\mathcal{H}^{d-1}(C_d(I,A)\cap\SS^{d-1}))^{n-2\ell+k}\mathcal{H}^{d-1}(C_d(I,A)\cap\SS^{d-1})^{\ell-k+1} V_{\ell}(I,A)^2 \\
	&\qquad \times \indicator\{\sphericalangle(z,B)\ll V_d(I,A)^{1/(d+1)}\}
	\indicator_{T_n}\,\mathcal{H}^{d-1}(\dint x_1)\cdots\mathcal{H}^{d-1}(\dint x_{\ell})\nu_{\ell}(\dint B) \\
	&\ll\sum_{h=h_0}^{\infty}\exp(-c_2(n-2\ell+k)2^{-h-1})2^{-h(\ell-k+1)}2^{-2h(\ell+1)/(d-1)}2^{-h\ell} \\
	&\qquad\times |\mathcal{M}_h|\nu_{\ell}(\{\sphericalangle(z,B)\ll V_d(I,A)^{1/(d+1)}\})  \\
	&\ll \sum_{h=h_0}^{\infty}\exp(-c_2(n-2\ell+k)2^{-h-1})2^{-h[(2\ell-k+1)+(d+3)/(d-1)]}.
	\end{align*}
	
	Note that we used \Cref{lemma:measure} and \Cref{number-caps} in the last step. 
	As in \cite{Barany2010}, we divide the previous sum into two parts in order to see the 
	magnitude of the variance. The integer $h_1$ is defined by
	\begin{equation*}
	2^{-h_1}\leq \frac{1}{n}<2^{-h_1+1}.
	\end{equation*}
	On the one hand, we have
	\begin{align*}
	\sum_{h=h_1}^{\infty}\exp(-c_2(n-2\ell+k)2^{-h-1})2^{-h[(2\ell-k+1)+(d+3)/(d-1)]}
	&\leq \sum_{h=h_1}^{\infty}2^{-h[(2\ell-k+1)+(d+3)/(d-1)]} \\
	&\ll n^{-(2\ell-k+1)}n^{-(d+3)/(d-1)}.
	\end{align*}
	On the other hand, let $i=h_1-h$. Then, we can perform the following estimate, namely,
	\begin{align*}
	\sum_{h=h_0}^{h_1-1}&\exp(-c_2(n-2\ell+k)2^{-h-1})2^{-h[(2\ell-k+1)+(d+3)/(d-1)]} \\
	&\leq\sum_{i=1}^{h_1-h_0}\exp(-c_2(n-2\ell+k)2^{-h_1+i-1})2^{-(h_1-i)[(2\ell-k+1)+(d+3)/(d-1)]} \\
	&\ll \sum_{i=1}^{h_1-h_0}\exp(-c_2(n-2\ell+k)2^{-h_1+i-1})n^{-(2\ell-k+1)}n^{-(d+3)/(d-1)}2^{i[(2\ell-k+1)+(d+3)/(d-1)]}\\
	&\ll n^{-(2\ell-k+1)}n^{-(d+3)/(d-1)}\sum_{i=1}^{\infty}\exp(-c_2 2^i)2^{i[(2\ell-k+1)+(d+3)/(d-1)]} \\
	&\ll n^{-(2\ell-k+1)}n^{-(d+3)/(d-1)}\sum_{j=1}^{\infty}\exp(-c_2 j)j^{5d} \\
	&\ll n^{-(2\ell-k+1)}n^{-(d+3)/(d-1)}.
	\end{align*}
	As a consequence, it holds
	\begin{equation*}
	\Sigma_k\ll n^{2\ell-k+1}\int_{G(d,\ell)}n^{-(2\ell-k+1)}n^{-(d+3)/(d-1)}\nu_{\ell}(\dint A)\ll n^{-(d+3)/(d-1)}.
	\end{equation*}
	Finally, the upper bounds are proven by summing up all $\Sigma_k$, $k=0,\dots,\ell$, in \Cref{eq:sigmak}.
	
	\bigskip
	
	In order to extend the proof to the case of a convex body \(K\in\mathcal{K}^2_+\), we follow the ideas presented in \cite[Section 6]{Barany2010}. By the compactness of \(\partial K\), there exist \(\gamma>0\) and \(\Gamma>0\), the global upper and the global lower bound on the principal curvatures of \(\partial K\), respectively. In our setting, all projected images of \(\partial K\) also have a boundary with the same properties as \(\partial K\), see for example \cite[Remark 5]{Schneider2014}. Without loss of generality we can choose \(\gamma\) and \(\Gamma\) to be also a bound on the principal curvatures of the boundaries of all \(\ell\text{-dimensional} \) projections of \(K\). Hence, one can locally approximate \(\partial K\) with affine images of balls and the volume of a \(\ell\text{-dimensional}\) cap with height \(t>0\) has order \(t^{\frac{\ell+1}{2}}\). Finally, \cite[Equation (27)]{Barany2010} ensures that \Cref{eq:boundbeta} still holds. 
\end{proof}

In the fashion of \cite[Section 7]{Barany2010}, we derive strong laws of large numbers from the upper variance bounds together with the following result of \cite{Reitzner2002}.

\begin{proposition}\textup{\cite[Theorem 1]{Reitzner2002}}
	Let \(K\in \mathcal{K}^2_+\) and choose $n$ random points on $\partial K$ independently 
	and according to the probability distribution $\mathcal{H}^{d-1}$. Then, there exist positive constants 
	\(c_{d,\ell,K}\) depending on \(d,\ell\) and the principal curvatures of \(K\) such that 
	\begin{equation}
	\label{eq:meanvolumes}
		\lim_{n\to\infty} \bigl(V_\ell(K)-\E[V_\ell(K_n)]\bigr)\cdot n^{\frac{2}{d-1}}=c_{d,\ell,K}, \quad \ell\in\{1,\dots,d\}.
	\end{equation}
\end{proposition}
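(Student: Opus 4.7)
The plan is to combine Kubota's formula with a direct cap analysis, exploiting the boundary-measure scaling of \Cref{relation-of-caps}. First, apply \Cref{eq.Kubota} and the fact that projection commutes with the convex hull, $K_n|L=[X_1|L,\dots,X_n|L]$, to obtain
\begin{equation*}
V_\ell(K) - \E[V_\ell(K_n)] = \binom{d}{\ell}\frac{\kappa_d}{\kappa_\ell \kappa_{d-\ell}}\int_{G(d,\ell)} \E\bigl[\vol_\ell(K|L) - \vol_\ell(K_n|L)\bigr]\,\nu_\ell(\dint L).
\end{equation*}
This reduces the task to computing the expected missed $\ell$-dimensional volume for each fixed $L\in G(d,\ell)$.

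Next, write $\vol_\ell(K|L) - \vol_\ell(K_n|L) = \int_{K|L} \indicator\{y \notin K_n|L\}\,\dint y$. By the separation theorem, $y \notin K_n|L$ iff there exists a halfspace $H^+\subset L$ with $y\in H^+$ and $X_i\notin H^+ + L^\perp$ for every $i$; equivalently, all the random points lie in a $d$-dimensional halfspace whose outward normal belongs to $L$. Parametrise a point near the boundary of $K|L$ by $y=\bar y-t\nu_{\bar y}$ with $\bar y\in\partial(K|L)$ and small $t>0$. The minimal separating halfspace through $y$ cuts a genuine cap $C^K(x_0,t)$ of $K$, where $x_0\in\partial K$ is the preimage of $\bar y$ under the shadow-boundary map of $L$ (the point of $\partial K$ whose outer normal coincides with $\nu_{\bar y}$). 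The paraboloid approximation \Cref{lemma:approx} at $x_0$ then yields $\mathcal{H}^{d-1}(C^K(x_0,t)\cap\partial K) = \psi(x_0)\,t^{(d-1)/2}(1+o(1))$, with $\psi(x_0)$ an explicit power of the Gaussian curvature at $x_0$.

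Consequently $\P(y\notin K_n|L)$ is, to leading order, $\exp\bigl(-n\psi(x_0)t^{(d-1)/2}\bigr)$, and the substitution $s=n\psi(x_0)t^{(d-1)/2}$ produces the scaling
\begin{equation*}
\int_0^\infty \exp\bigl(-n\psi(x_0)t^{(d-1)/2}\bigr)\,\dint t \;=\; \Theta\bigl(n^{-2/(d-1)}\bigr).
\end{equation*}
Carrying out the remaining integrations over $\bar y\in\partial(K|L)$ and $L\in G(d,\ell)$, and exchanging the order of integration using the coarea formula for the shadow-boundary map, rewrites the limiting constant as an integral over $\partial K$ of an explicit elementary symmetric expression in the principal curvatures. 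This yields positive constants $c_{d,\ell,K}$ of the desired form.

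The main obstacle is justifying the interchange of $\lim_{n\to\infty}$ with the spatial and Grassmannian integrals, since the pointwise asymptotics above must be supplemented by uniform bounds. The economic cap covering \Cref{economic-cap-theorem} together with \Cref{relation-of-caps} furnish an integrable majorant, so that dominated convergence applies. A secondary technical point is the $\nu_\ell$-negligible set of subspaces $L$ at which the shadow boundary is not smooth (e.g., $L$ tangent to some normal direction of $\partial K$); these contribute nothing to the final limit.
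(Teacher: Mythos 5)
The paper does not prove this Proposition: the statement carries the inline citation \textup{\cite[Theorem 1]{Reitzner2002}}, and the text immediately below it defers the explicit form of the constants $c_{d,\ell,K}$ to \cite[Equation~(2)]{Reitzner2002}. There is therefore no in-paper proof against which to compare your attempt; what follows assesses the plan on its own terms.

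The skeleton you propose (Kubota's formula, reduction to expected missed $\ell$-volume per subspace, local paraboloid approximation, economic cap covering for a dominating function) is the right kind of argument for a result of this type. The genuine gap is the step where you replace $\P(y\notin K_n|L)$ by $\exp\bigl(-n\psi(x_0)t^{(d-1)/2}\bigr)$ ``to leading order.'' The event $\{y\notin K_n|L\}$ is the union, over the continuum of halfspaces supporting $y$ with outward normal in $L\cap\SS^{d-1}$, of the events that all $n$ points avoid the corresponding cap of $\partial K$. Retaining only the single minimal halfspace gives a one-sided (lower) bound, and the contribution of the nearby normal directions produces a non-trivial multiplicative factor (of order one, but not equal to one). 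Since the goal is to identify the exact constant $c_{d,\ell,K}$, and not merely the order $n^{-2/(d-1)}$, this factor cannot be swept into a $1+o(1)$. The standard way to close this gap is to abandon the pointwise probability and use instead an Efron-type decomposition of $(K|L)\setminus(K_n|L)$ into pyramids over the visible $(\ell-1)$-faces of $K_n|L$, together with a Blaschke--Petkantschin change of variables; alternatively one can carry out a genuine Laplace-method analysis of the union over directions. A secondary omission is that the ``coarea formula for the shadow-boundary map'' is invoked without its Jacobian, which is precisely where the stated dependence of $c_{d,\ell,K}$ on the principal curvatures enters.
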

	For the sake of brevity, the explicit expression of \(c_{d,\ell,K}\) is omitted here. It can be found in \cite[Equation (2)]{Reitzner2002}.
	
\begin{proof}[Proof of \Cref{thm:strongnumbers}]
	Let $\ell\in\{1,\dots,d\}$. Chebyshev's inequality and the variance upper bound yield
	\begin{equation*}
		\P\bigl( \bigl|V_\ell(K)-V_\ell(K_n)-\E\bigl[V_\ell(K)-V_\ell(K_n)\bigr]\bigr|\cdot n^{\frac{2}{d-1}}\geq\varepsilon\bigr)\le \varepsilon^{-2}n^{\frac{4}{d-1}}\Var[V_\ell(K_n)]\ll n^{-1}.
	\end{equation*}
	Select now the subsequence of indices \(n_k=k^2\). Then, it follows
	\begin{equation*}
	\sum_{k=1}^\infty 		\P\bigl( \bigl|V_\ell(K)-V_\ell(K_{n_k})-\E\bigl[V_\ell(K)-V_\ell(K_{n_k})\bigr]\bigr|\cdot n_k^{\frac{2}{d-1}}\geq\varepsilon\bigr)\ll 	\sum_{k=1}^\infty k^{-2}<\infty.
	\end{equation*}
	Applying the Borel-Cantelli Lemma together with \Cref{eq:meanvolumes}, we obtain that
	\begin{equation*}
		\lim_{k\to\infty} \bigl(V_\ell(K)-V_\ell(K_{n_k})\bigr)\cdot n_k^{\frac{2}{d-1}}=c_{d,\ell,K}
	\end{equation*}
	holds with probability 1. Note that \(V_\ell(K)-V_\ell(K_{n})\) is a decreasing and positive sequence. Therefore, this gives
		\begin{equation*}
 \bigl(V_\ell(K)-V_\ell(K_{n_{k}})\bigr)\cdot n_{k-1}^{\frac{2}{d-1}}\le \bigl(V_\ell(K)-V_\ell(K_{n})\bigr)\cdot n^{\frac{2}{d-1}}\le	\bigl(V_\ell(K)-V_\ell(K_{n_{k-1}})\bigr)\cdot n_k^{\frac{2}{d-1}}, 
 \end{equation*}
whenever \(n_{k-1}\le n\le n_k\). Taking the limit as \(k\to\infty\), \(n_{k-1}/n_k\to 1\), which allows us to conclude that the desired limit is reached by the whole sequence with probability \(1\).
\end{proof}
\section{Central limit theorems}
\label{Sec:centrallimitheorems}
In this last section, we prove the central limit theorems. In contrast to \cite{TTW2017}, where floating bodies were used, here 
we work with surface bodies as it was already done in \cite{Thale2017} for the case of the volume. In addition to that, we make use of the normal approximation bound of \Cref{prop:normalbound}.
\begin{proof}[Proof of \Cref{thm:CLT}]
	First, we prove the central limit theorems for $K=B^d$. For this reason, let us introduce the two events $B_1$ and $B_2$. The event that the random polytope $[X_2,\dots,X_n]$ contains the surface body $K(s\geq\tau_n)$ is denoted by $B_1$. Due to the definition of $B_1$, it follows by \Cref{lemma:surface-body} that
	\begin{equation*}
	\P(B_1^c)\leq c_1 n^{-\alpha},
	\end{equation*}
	where $c_1\in(0,\infty)$ is independent of $n$. We denote by $B_2$ the event that the random polytope $\bigcap_{W\in\{Y,Y',Z,Z'\}}[W_4,\ldots,W_n]$ contains the surface body $K(s\geq\tau_n)$, where $Y,Y',Z,Z'$ are recombinations of the random vector $X=(X_1,\ldots,X_n)$. By taking the union bound, we obtain
	\begin{equation*}
	\P(B_2^c)\leq c_2 n^{-\alpha},
	\end{equation*}
	where $c_2\in(0,\infty)$ is again independent of $n$. Next, for any \(\ell\in\{1,\ldots,d\}\), we apply the bound in \Cref{prop:normalbound} to the random variables
	\begin{equation*}
	W=f(X_1,\ldots,X_n):=V_{\ell}([X_1,\ldots,X_n])-\E\bigl[V_{\ell}(K_n)\bigr].
	\end{equation*}
	Note that $D_iW=D_iV_{\ell}(K_n)$ and $D_{i_1,i_2}W=D_{i_1,i_2}V_{\ell}(K_n)$ for $i,i_1,i_2\in\{1,\ldots,n\}$. Conditioned on the event $B_1$, we obtain from \eqref{eq.Kubota},
	\begin{equation}
	\label{eq:D1}
	D_1V_{\ell}(K_n)= \binom{d}{\ell}\frac{\kappa_d}{\kappa_\ell\kappa_{d-\ell}}\int_{G(d,\ell)}\mathrm{ vol}_{\ell}\bigl((K_n|L)\!\setminus\!([X_2,\dots,X_n]|L)\bigr)\,\nu_{\ell}(\textup{d} L).
	\end{equation}
	We now define a full-dimensional cap \(C\) in such a way that \(K_n\setminus [X_2,\ldots,X_n]\) is contained in \(C\). Consider now the visibility region \(	\mathrm{Vis}_{X_1}(\tau_n)\) of \(X_1\). By definition of the event \(B_1\), the surface body and by Lemma \ref{relation-of-caps}, the diameter of this visibility region is at most \(c_3 \tau_n^{1/(d-1)}\), where \(c_3>0\). We now indicate with \(D(X_1,c_3 \tau_n^{1/(d-1)})\) the points on \(\partial K\) with distance at most \(c_3\tau_n^{1/(d-1)}\) from \(X_1\). Then, \(C\coloneqq\conv\bigl\{D(X_1,c_3\tau_n^{1/(d-1)})\bigr\}\) is a spherical cap and it follows from \Cref{lemma-Vu} that \(C\) has volume of order at most \(\tau_n^{(d+1)/(d-1)}\). We call \(\alpha\) the central angle of \(C\). For any subspace \(L\in G(d,\ell)\), it holds that \((K_n|L)\setminus([X_2,\ldots,X_n]|L)\subseteq(C|L)\). We obtain \(\vol_{\ell}(C|L)\ll \tau_n^{(\ell+1)/(d-1)}\). Indeed, the height of \(C|L\) has the same order as the height of \(C\), namely \(\tau_n^{2/(d-1)}\), while the order of its base changes from \(((\tau_n)^{1/(d-1)})^{d-1}\) to  \(((\tau_n)^{1/(d-1)})^{\ell-1}\), since the dimension of \(L\) is \(\ell\). By construction of \(C\), it now follows that if \(\sphericalangle(X_1,L)\), the angle between \(X_1\) and \(L\), is too wide compared to \(\alpha\), then \(C|L\subseteq K_n|L\), for sufficiently large \(n\). Whenever this occurs, it also holds in particular that \((K_n\setminus[X_2,\ldots,X_n])|L\subseteq K_n|L\), i.e., \(K_n|L=[X_2,\ldots,X_n]|L\). In fact, one can check that the integrand in \eqref{eq:D1} can only be non-zero if \(\sphericalangle(X_1,L)\ll\alpha\). Therefore, we can restrict the integration to the set \(\{L\in G(d,\ell):\sphericalangle(X_1,L)\ll\alpha\} \). Moreover, it holds that \(\alpha\ll V_d(C)^{1/(d+1)}\), see e.g. \cite[Equation (21)]{Barany2010}. 
%
	According to \Cref{lemma:measure}, this gives
		\begin{equation*}
		\nu_{\ell}\bigl(\bigl\{L\in G(d,\ell):\sphericalangle(X_1,L)\ll V_d(C)^{\frac{1}{d+1}}\bigr\}\bigr)\ll \tau_n^{\frac{d-\ell}{d-1}}.
		\end{equation*}
	Putting everything together, we see that
	\begin{align}\label{eq:Bound1stOrderDifference}
	D_1V_{\ell}(K_n)\ll \tau_n^{\frac{\ell+1}{d-1}}\cdot \tau_n^{\frac{d-\ell}{ d-1}}\ll\Bigl(\frac{\log{n}}{n}\Bigr)^{\frac{d+1}{d-1}}.
	\end{align}
	On the complement $B_1^c$ of $B_1$ we use the trivial estimate $D_1V_{\ell}(K_n)\leq V_{\ell}(K)$. Since $\P(B_1^c)\ll n^{-\alpha}$, we obtain
	\begin{align*}
	\E[(D_1V_{\ell}(K_n))^p] &=\E[(D_1V_{\ell}(K_n))^p\,\indicator_{B_1}]+\E[(D_1V_{\ell}(K_n))^p\,\indicator_{B_1^c}]\\
	&\ll \Bigl(\frac{\log{n}}{n}\Bigr)^{p \frac{d+1}{d-1}},
	\end{align*} 
	for all $p\geq1$. As a consequence, we can bound the terms in the normal approximation bound which involve $\gamma_3$ and $\gamma_4$. Thus,
	\begin{align*}
	\frac{\sqrt{n}}{\Var[V_{\ell}(K_n)]}\sqrt{\gamma_3} &\ll\frac{\sqrt{n}}{n^{-\frac{d+3}{d-1}}} \Bigl(\frac{\log{n}}{n}\Bigr)^{2 \frac{d+1}{d-1}}= n^{-\frac{1}{2}}(\log{n})^{2+\frac{4}{d-1}}\,,\\
	\frac{n}{(\Var[V_{\ell}(K_n)])^{\frac{3}{2}}}\,\gamma_4 & \ll \frac{n}{n^{-\frac{3}{2}\frac{d+3}{d-1}}}\Bigl(\frac{\log{n}}{n}\Bigr)^{3 \frac{d+1}{d-1}}= n^{-\frac{1}{2}}(\log{n})^{3+\frac{6}{d-1}}\,.
	\end{align*}
	By using the Cauchy-Schwarz inequality, we can estimate $\gamma_5$ as well. Namely,
	\begin{align*}
	\gamma_5\leq \sqrt{\Var[V_{\ell}(K_n)]}\, \sup_{A\subseteq\{1,\dots,n\}}\sqrt{\E\bigl[|D_1f(X^{A})|\bigr]^6}\ll n^{-{1\over2}{d+3\over d-1}} \Bigl(\frac{\log{n}}{n}\Bigr)^{3 \frac{d+1}{d-1}}.
	\end{align*}
	Thus, we obtain
	\begin{align*}
	{n\over(\Var [V_{\ell}(K_n)])^2}\,\gamma_5\ll {n\over n^{-2{d+3\over d-1} }}\,n^{-{1\over2}{d+3\over d-1}} \Bigl(\frac{\log{n}}{n}\Bigr)^{3 \frac{d+1}{d-1}}= n^{-{1\over 2}}(\log{n})^{3+{6\over d-1}}.
	\end{align*}
	In the next step, we consider the terms involving the second difference operator.
	On the event $B_2$ it may be concluded from \eqref{eq:Bound1stOrderDifference} that $D_if(V)^2\ll (\log{n}/n)^{2{d+1\over d-1}}$ for all $i\in\{1,2,3\}$ and $V\in\{Z,Z'\}$. Moreover, we note that on $B_2$ the following inclusions hold
	\begin{equation*}
	\{D_{1,2}f(Y)\neq 0\}\subseteq\{\mathrm{Vis}_{Y_1}(\tau_n)\cap\mathrm{Vis}_{Y_2}(\tau_n)\neq\emptyset\}\subseteq \biggl\{Y_2\in\!\!\!\!\!\bigcup_{x\in\mathrm{Vis}_{Y_1}(\tau_n)}\!\!\!\!\mathrm{Vis}_x(\tau_n)\biggr\}.
	\end{equation*}
	The same applies to $D_{1,3}f(Y')$. Thus,
	\begin{align*}
	\E\bigl[\indicator\{D_{1,2}f(Y)\neq 0\}\indicator_{B_2}\bigr]\leq \sup_{z\in\partial K}\P\biggl(Y_2\in\!\!\!\!\!\bigcup_{x\in\mathrm{Vis}_{z}(\tau_n)} \!\!\!\!\!\mathrm{Vis}_x(\tau_n)\biggr).
	\end{align*}
	 We note that the diameter of the previous union is at most \(c_4 \tau_n^{1/(d-1)}\), where \(c_4>0\). 	As before, we define the spherical cap \(C'\coloneqq\conv\{D(z,c_4\tau_n^{1/(d-1)}) \}\). It follows from \Cref{lemma-Vu} that \(C'\) has volume of order at most \(\tau_n^{(d+1)/(d-1)}\).
		We obtain
		\begin{equation*}
		\begin{split}
		\sup_{z\in\partial K}\P\biggl(Y_2\in\!\!\!\!\bigcup_{x\in\mathrm{Vis}_{z}(\tau_n)} \!\!\!\!\mathrm{Vis}_x(\tau_n)\biggr)&=\sup_{z\in\partial K}\mathcal{H}^{d-1}\biggl(\biggl(\bigcup_{x\in\mathrm{Vis}_{z}(\tau_n)}\!\!\!\! \mathrm{Vis}_x(\tau_n)\biggr)\cap \partial K\biggr)\\
		&\le \sup_{z\in\partial K}\mathcal{H}^{d-1}\bigl(C'\cap \partial K\bigr)\\
		&\ll \tau_n,
			\end{split}
		\end{equation*}
where for the last inequality we have used \Cref{relation-of-caps}.
	On the event $B_2^c$ we use the trivial estimate $V_{\ell}(K)$ for all difference operators and estimate all indicators by one. Since $\P(B_2^c)\ll n^{-\alpha}$, we obtain
	\begin{equation*}
	\gamma_2\ll \Bigl(\frac{\log{n}}{n}\Bigr)^{1+4 \frac{d+1}{d-1}}.
	\end{equation*}
	Analogously, we can bound $\gamma_1$. Indeed, suppose that $Y_1=Y'_1$ (by independence, $Y_1\neq Y'_1$ gives a smaller order), then
	\begin{equation*}
	\{D_{1,2}f(Y)\neq0\}\cap\{D_{1,3}f(Y')\neq 0\}\subseteq\biggl\{\{Y_2,Y_3'\}\subseteq\!\!\!\!\bigcup_{x\in\mathrm{Vis}_{Y_1}(\tau_n)} \!\!\!\!\mathrm{Vis}_x(\tau_n)\biggr\}
	\end{equation*}
	and we obtain
	\begin{equation*}
	\E\bigl[\indicator\{D_{1,2}f(Y)\neq 0\}\indicator\{D_{1,3}f(Y')\neq 0\}\bigr]\ll \Bigl(\frac{\log{n}}{n}\Bigr)^2.
	\end{equation*}
	Thus,
	\begin{equation*}
	\gamma_1\ll\Bigl(\frac{\log{n}}{n}\Bigr)^{2+4 \frac{d+1}{d-1}}.
	\end{equation*}
	Finally,
	\begin{align*}
	{\sqrt{n}\over\Var [V_{\ell}(K_n)]}\,\sqrt{n^2\gamma_1}& \ll {\sqrt{n}\over n^{-{d+3\over d-1}}}\,\sqrt{n^2\Bigl(\frac{\log{n}}{n}\Bigr)^{2+4 \frac{d+1}{d-1}}}=n^{-\frac{1}{2}}(\log n)^{3+\frac{4}{d-1}},\\
	\frac{\sqrt{n}}{\Var [V_{\ell}(K_n)]}\,\sqrt{n\gamma_2} &\ll {\sqrt{n}\over n^{-{d+3\over d-1}}}\,\sqrt{n \Bigl(\frac{\log{n}}{n}\Bigr)^{1+4 \frac{d+1}{d-1}}}=n^{-\frac{1}{2}}(\log n)^{\frac{5}{2}+\frac{4}{d-1}}.
	\end{align*}
	Considering all the estimates together, we obtain by \Cref{prop:normalbound}
	\begin{equation*}
	\begin{split}
	d_K\bigl(W_\ell(K_n),N\bigr)&\ll n^{-\frac{1}{2}}\bigl((\log n)^{3+\frac{4}{d-1}}+(\log n)^{\frac{5}{2}+\frac{4}{d-1}}\\
	&\qquad\qquad\qquad+(\log n)^{2+\frac{4}{d-1}}+(\log n)^{3+\frac{6}{d-1}}+(\log n)^{3+\frac{6}{d-1}}\bigr)\\
	&\ll n^{-\frac{1}{2}}(\log n)^{3+\frac{6}{d-1}}.
	\end{split}
	\end{equation*}
For the case of a generic \(K\in\mathcal{K}^2_+\) we argue as at the end of the proof of the upper bounds of \Cref{thm:variancebounds}. Because of the global bounds on the principal curvatures and the local approximation of \(\partial K\) with affine images of balls, the construction of \(C\) and the relations regarding its volume, its central angle and the subspaces \(L\) which ensure \(C|L\subseteq K_n|L\) are not afflicted. In particular, the asymptotic bounds \(\vol_{\ell}(C|L)\ll\tau_n^{{(\ell+1)}/{(d-1)}}\), \(\alpha\ll V_d(C)^{1/(d+1)}\ll\tau_n
^{1/(d-1)} \) and \(\sphericalangle(X_1,L)\ll\alpha \) stated above still hold, with the difference that the implicit constants depend on \(\gamma \) and \(\Gamma\), the bounds on the principal curvatures of \(\partial K\). Then, the proof can be completed like in the case of the ball.
\end{proof}
	\large\textbf{Acknowledgement} 
	
	\bigskip 
	
	\normalsize We would like to thank Christoph Th\"ale for helpful discussions and insights.

\bibliographystyle{abbrv}
\bibliography{references}

\end{document}